\documentclass{article}
\usepackage{amssymb}
\usepackage{amsmath}
\usepackage{amsthm}
\usepackage{graphicx}
\usepackage{delarray}
\usepackage{tikz-cd}
\usepackage{hyperref}
\usepackage{cleveref}
\providecommand{\coloneqq}{\mathrel{:=}}
\providecommand{\eqqcolon}{\mathrel{=:}}
\newcommand{\lb}{\begin{array}[t][{@{}c@{}}]\displaystyle}
\newcommand{\rb}{\end{array}}
\newcommand{\op}[1]{\operatorname{#1}}
\newcommand{\textem}[1]{\textbf{#1}}
\newtheorem{definition}{Definition}
\newtheorem{theorem}[definition]{Theorem}
\newtheorem{proposition}[definition]{Proposition}
\newtheorem{lemma}[definition]{Lemma}

\crefname{definition}{Definition}{Definitions}
\crefname{theorem}{Theorem}{Theorems}
\crefname{proposition}{Proposition}{Propositions}
\crefname{lemma}{Lemma}{Lemmata}
\crefname{corollary}{Corollary}{Corollaries}

\newenvironment{cd}[1]{\begin{tikzcd}[ampersand replacement=\&, #1]}{\end{tikzcd}}
\tikzset{commutative diagrams/arrow style=math font}
\newcommand{\ssp}{\;}
\newcommand{\cl}{\colon}

\newcommand{\ocdots}{\mathord{\cdots}}
\newcommand{\dsm}{\delta}
\newcommand{\esm}{\varepsilon}
\newcommand{\psm}{\pi}
\newcommand{\dbl}{\delta _{\mathrm{BL}}}
\newcommand{\ebl}{\varepsilon _{\mathrm{BL}}}
\newcommand{\pbl}{\pi _{\mathrm{BL}}}
\newcommand{\treet}{\hat{T}}
\newcommand{\tjoin}{\mathbin{\wedge}}
\newcommand{\ajoin}{\mathbin{\hat{\otimes}}}

\newcommand{\trivtree}{\bullet}
\newcommand{\empseq}{\emptyset}
\newcommand{\untree}{\mathclose{\downarrow}}
\newcommand{\rmL}{\mathrm{L}}
\newcommand{\rmR}{\mathrm{R}}
\newcommand{\bbK}{\mathbb{K}}
\newcommand{\lP}{\langle}
\newcommand{\mP}{\mid}
\newcommand{\rP}{\rangle}
\newcommand{\lM}{\{}
\newcommand{\rM}{\}}
\newcommand{\leaf}{\op{leaf}}
\newcommand{\pdeg}{\op{deg}}
\newcommand{\pcan}{\op{can}}
\newcommand{\prV}{\alpha _V}
\newcommand{\prK}{\alpha _\bbK}

\newcommand{\rmid}{\mathrm{id}}
\newcommand{\coeff}[2]{C (#1, #2)}
\newcommand{\rep}[1]{{\op{Rep} (#1)}}

\newcommand{\nor}[1]{{\op{Nor} (#1)}}
\newcommand{\blrep}[1]{{\op{Rep} _{\mathrm{BL}} (#1)}}
\tikzset{
  point/.style={draw, circle, fill, inner sep=0pt, minimum size=2pt},
}
\title{Explicit~construction of~cofree~precoalgebras and~coalgebras}
\author{Yuki Goto}
\date{}
\begin{document}
\maketitle

\begin{abstract}
  In this paper, we will explicitly construct cofree coalgebras in the category of modules on a commutative ring.
  Our approach does not impose any condition to the coefficient ring, which means, for example, it does not need to be a field.
  The main technique is to generate homomorphisms from certain type of infinite trees so that those constitute cofree precoalgebras.
\end{abstract}

\section{Introduction}

Given a module $ V $, a coalgebra $ \overline{V} $ with a couniversal homomorphism $ \pi \cl \overline{V} \to V $ is called a cofree coalgebra.
Although this notion first arose in algebraic topology, it is also known to be worthwhile in categorical logic, in particular Girard's linear logic \cite{gllx}.
In fact, based on a Lafont's idea, Bierman showed that in a category with cofree coalgebras, we can interpret the exponential operator of linear logic as a functor which sends an object to the cofree coalgebra over it \cite{llcm,bill}.

Until now, several explicit constructions of cofree coalgebras have been introduced.
For example, Sweedler constructed them by using the notion of finite dual, which was generalised by Anquela and Cort\'es to those over arbitrary algebraic varieties \cite{hax,accx}.
Block and Leroux presented them as the vector space of all representative linear maps \cite{bgdc}.
Hazewinkel used the notion of tensor power series, and Murfet used local cohomology and residues \cite{hccm,mscc}.
However, all of these constructions require some conditions on the coefficient ring, mainly being a field.
In this paper, we will present a new approach to explicitly construct cofree coalgebras, which does not impose any condition to the coefficient ring.

In our approach, we will first consider precoalgebras (namely coalgebras not necessarily coassociative nor counital), and construct cofree precoalgebras.
We will then construct cofree coalgebras from them.
There are two reasons why we first construct cofree precoalgebras, instead of making cofree coalgebras directly.
The first is that it is fairly easier to manipulate precoalgebras than coalgebras, since we do not need to consider coassociativity or counitality.
The second is that a precoalgebra structure is needed to construct a categorical model of linear logic.
There is a general method, called Chu construction, to create a model of linear logic from a monoidal closed category, but here we need a cofree precoalgebra structure rather than a cofree coalgebra structure \cite{ccsa,bacm}.

Our main technique is to consider some sort of infinite trees and generate homomorphisms from them.
This idea was once adopted by Fox, but his argument was pointed out to contain an error by Hazewinkel \cite{fccc,hccm}.
This paper is also meant to fix this error.

The outline of this paper is as follows.
In Section 2, we will recall the definition of precoalgebras and coalgebras and their cofreeness.
We will also recall the Barr's result which provides a method of creating cofree coalgebras from cofree precoalgebras.
Section 3 is the main part of this paper, which concerns an explicit construction of cofree precoalgebras and cofree coalgebras.
In Section 4, we will briefly compare the Block and Leroux's construction with ours by explicitly presenting an isomorphism between them.

\section{Precoalgebras and coalgebras}

In this section, we will recall the definition of precoalgebras and coalgebras, and also that of their cofreeness.
Throughout this paper, we fix a commutative ring (not necessarily a field) as a coefficient ring of modules, and let $ \bbK $ denote it.
We suppose all modules, homomorphisms between them and the tensor products of them are all over $ \bbK $.

\begin{definition}
  A module $ C $ is called a \textem{precoalgebra} when it is equipped with two homomorphisms $ \dsm \cl C \to C \otimes C $ and $ \esm \cl C \to \bbK $, which satisfy no other additional conditions.
\end{definition}

\begin{definition}
  A module $ C $ is called a \textem{coalgebra} when it is equipped with two homomorphisms $ \dsm \cl C \to C \otimes C $ and $ \esm \cl C \to \bbK $, and these homomorphisms make the following diagrams commute:
  \begin{gather*}
    \begin{cd}{}
      C \ar[r, "\dsm"] \ar[d, "\dsm"'] \& C \otimes C \ar[d, "\dsm \otimes \rmid"]\\
      C \otimes C \ar[r, "\rmid \otimes \dsm"'] \& C \otimes C \otimes C
    \end{cd} \qquad
    \begin{cd}{}
      \& C \ar[d, "\dsm"] \ar[dr, ""] \ar[dl, ""'] \& \\
      \bbK \otimes C \& C \otimes C \ar[l, "\esm \otimes \rmid"] \ar[r, "\rmid \otimes \esm"'] \& C \otimes \bbK
    \end{cd}
  \end{gather*}
  Here, $ C \to \bbK \otimes C $ and $ C \to C \otimes \bbK $ are the canonical isomorphisms.
\end{definition}

Clearly every coalgebra is a precoalgebra.
We will sometimes say that a precoalgebra is \textem{admissible} when it is also a coalgebra (that is, when the two diagrams above both commute).

A morphism between them is defined as follows:

\begin{definition}
  For two precoalgebras $ (C, \dsm, \esm) $ and $ (D, \dsm', \esm') $, a homomorphism $ \varphi \cl D \to C $ is called a \textem{precoalgebra morphism} when the following diagrams both commute:
  \begin{gather*}
    \begin{cd}{}
      D \ar[r, "\varphi"] \ar[d, "\dsm'"'] \& C \ar[d, "\dsm"] \\
      D \otimes D \ar[r, "\varphi \otimes \varphi"'] \& C \otimes C
    \end{cd} \qquad
    \begin{cd}{column sep=small}
      D \ar[rr, "\varphi"] \ar[rd, "\smash{\esm'} \vphantom{\esm}"'] \& \& C \ar[ld, "\esm"] \\
      \& \bbK \&
    \end{cd}
  \end{gather*}
  A coalgebra morphism is defined in the exactly same way.
\end{definition}

Finally, the cofreeness of them is defined as follows:

\begin{definition}
  For a module $ V $, a precoalgebra $ (\overline{V}, \dsm, \esm) $ is said to be \textem{cofree} over $ V $ when the following two conditions both hold:
  \begin{itemize}
    \item $ \overline{V} $ is equipped with a homomorphism $ \psm \cl \overline{V} \to V $;
    \item for any precoalgebra $ (D, \dsm', \esm') $ with a homomorphism $ \varphi \cl D \to V $, there exists a unique precoalgebra morphism $ \tilde{\varphi} \cl D \to \overline{V} $ which makes the following diagram commute:
    \begin{gather*}
      \begin{cd}{}
        D \ar[d, "\tilde{\varphi}"'] \ar[dr, "\varphi"] \& \\
        \overline{V} \ar[r, "\psm"'] \& V 
      \end{cd}
    \end{gather*}
  \end{itemize}
  The cofreeness of a coalgebra is also similarly defined.
\end{definition}

Barr showed that, once cofree precoalgebras are constructed, cofree coalgebras can be given as the largest admissible subprecoalgebras of them.
He proved this fact in any cocomplete and well-powered categories, but here we restate it in the category of modules to fit our settings.

\begin{theorem}[Barr, Theorem 4.3 \cite{bacm}] \label{barr}
  Assume that there exists the cofree precoalgebra $ (\overline{V}, \dsm, \esm) $ over any module $ V $.
  Then the largest admissible subprecoalgebra of $ (\overline{V}, \dsm, \esm) $ is the cofree coalgebra over every module $ V $.
\end{theorem}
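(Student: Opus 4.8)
The plan is to verify the universal property of a cofree coalgebra directly for the pair $(C,\psm_C)$, where $C$ denotes the largest admissible subprecoalgebra of $\overline{V}$, $\iota\cl C\hookrightarrow\overline{V}$ is its inclusion, and $\psm_C\coloneqq\psm\circ\iota$. Conceptually this is the statement that the inclusion of coalgebras into precoalgebras is coreflective, the coreflection of a precoalgebra being its largest admissible subprecoalgebra; granting that, the theorem follows by composing the coreflection with the cofree precoalgebra adjunction that is assumed to exist. I would unwind this into three steps: (a) that $\overline{V}$ has a largest admissible subprecoalgebra at all; (b) the uniqueness half of the universal property; (c) the existence half, namely factoring a given precoalgebra morphism through $C$.

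Step (b) is routine. Given a coalgebra $(D,\dsm',\esm')$ and a homomorphism $\varphi\cl D\to V$, if $\tilde{\varphi},\tilde{\varphi}'\cl D\to C$ are coalgebra morphisms with $\psm_C\tilde{\varphi}=\varphi=\psm_C\tilde{\varphi}'$, then $\iota\tilde{\varphi}$ and $\iota\tilde{\varphi}'$ are precoalgebra morphisms $D\to\overline{V}$ lying over $\varphi$, hence equal by the uniqueness clause in the cofreeness of $\overline{V}$; since $\iota$ is a monomorphism, $\tilde{\varphi}=\tilde{\varphi}'$. For step (c), cofreeness of $\overline{V}$ supplies a unique precoalgebra morphism $\psi\cl D\to\overline{V}$ with $\psm\psi=\varphi$, and it suffices to show $\psi$ factors through $\iota$, say $\psi=\iota\tilde{\varphi}$. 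Once $\psi(D)\subseteq C$ this determines the homomorphism $\tilde{\varphi}$; the counit condition $\esm_C\tilde{\varphi}=\esm'$ reads off from $\esm\psi=\esm'$, and the comultiplication condition $\dsm_C\tilde{\varphi}=(\tilde{\varphi}\otimes\tilde{\varphi})\dsm'$ holds because both sides equal $\dsm\psi$ after applying $\iota\otimes\iota$; so the only delicate point is the behaviour of $\ker(\iota\otimes\iota)$.

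That remark signals where the real work lies, shared by (a) and (c). Over an arbitrary commutative ring $(-)\otimes(-)$ is not left exact, so for a submodule $W\subseteq\overline{V}$ the natural map $W\otimes W\to\overline{V}\otimes\overline{V}$ need not be injective; consequently the two facts one would most like to use are false in their naive form and must be handled with care --- namely, that a sum of admissible subprecoalgebras is again admissible (which would let one take $C$ to be the sum of all of them, a set by well-poweredness of $\bbK$-modules), and that the image of a precoalgebra morphism from a coalgebra is an admissible subprecoalgebra (which would give $\psi(D)\subseteq C$ in step (c) immediately). I expect this to be the main obstacle. The way around it is Barr's: construct the coreflection $R$ of coalgebras in precoalgebras by the adjoint functor theorem --- the category of coalgebras being cocomplete with colimits created from $\bbK$-modules, the inclusion into precoalgebras preserving these colimits, and well-poweredness furnishing the solution set --- and then check, which is where the failure of $\otimes$ to be left exact must be paid off, that the counit $R(\overline{V})\to\overline{V}$ identifies $R(\overline{V})$ with the largest admissible subprecoalgebra of $\overline{V}$. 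With $C=R(\overline{V})$ in hand, steps (b) and (c) together amount to the natural bijection between coalgebra morphisms $D\to C$ and homomorphisms $D\to V$ obtained by composing the coreflection adjunction with the cofreeness of $\overline{V}$. In short, what is genuinely needed is that $\bbK$-modules form a cocomplete, well-powered category --- the hypotheses of the cited theorem --- the remainder being Barr's general argument.
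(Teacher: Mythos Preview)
The paper does not supply a proof of this theorem; it is quoted from Barr \cite{bacm} and used as a black box. There is therefore no argument in the paper to compare your proposal against.

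As a standalone sketch your outline is accurate and faithful to Barr's strategy: the passage from cofree precoalgebras to cofree coalgebras is precisely the coreflection of coalgebras inside precoalgebras, composed with the assumed cofree-precoalgebra adjunction, and you have correctly isolated the one genuine obstacle over a ring that is not a field---the failure of $\otimes$ to be left exact, which blocks the naive arguments that a sum of admissible subprecoalgebras is admissible or that the image of a coalgebra under a precoalgebra morphism is an admissible subprecoalgebra. Your remedy, the adjoint functor theorem in the cocomplete, well-powered category of $\bbK$-modules, is exactly Barr's. The step you flag but do not carry out---that the counit $R(\overline{V})\to\overline{V}$ is a monomorphism, so that $R(\overline{V})$ is genuinely a subprecoalgebra of $\overline{V}$ rather than merely a coalgebra mapping to it---is where the content of the theorem actually resides; your sketch is honest about this, but to turn it into a proof you would have to supply that verification rather than defer to the citation.
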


In the next section, we will first present an explicit construction of cofree precoalgebras.
We will furthermore construct the largest admissible subprecoalgebras of them, which are proved to be cofree coalgebras by this theorem.

\section{Explicit construction}

\subsection{Cofree precoalgebras}

In this section, we will introduce two types of trees: one is called a grading tree, and the other is called a generating tree.
The former is used as a parametre of a special type of modules considered throughout this section (see \cref{defgrt,deftga}), and the latter is used to generate a homomorphism (see \cref{defgnt,defgenhom}).

A grading tree is defined as follows:

\begin{definition} \label{defgrt}
  By a \textem{grading tree}, we mean a finite full binary tree, that is, a rooted tree such that the number of nodes are finite and every node has exactly zero or two child nodes.
\end{definition}

\newcommand{\extreeleft}{
  \tikz[baseline={([yshift=-.8ex]current bounding box.center)}]{
    \path (0, 0) node [point] (O) {};
    \path (-0.2, -0.3) node [point] (L) {};
    \path (0.2, -0.3) node [point] (R) {};
    \draw (O) -- (L); \draw (O) -- (R);
  }
}
\newcommand{\extree}{
  \tikz[baseline={([yshift=-.8ex]current bounding box.center)}]{
    \path (0, 0) node [point] (O) {};
    \path (-0.2, -0.3) node [point] (L) {};
    \path (0.2, -0.3) node [point] (R) {};
    \path (-0.35, -0.6) node [point] (LL) {};
    \path (-0.05, -0.6) node [point] (LR) {};
    \draw (O) -- (L); \draw (O) -- (R);
    \draw (L) -- (LL); \draw (L) -- (LR);
  }
}

For two grading trees $ t $ and $ u $, we write $ t \tjoin u $ for the grading tree obtained by adding a new root which has two edges to the roots of $ t $ and $ u $.
Moreover, we write $ \trivtree $ for the trivial grading tree, which has no node other than the root.
For example, when $ t = \extreeleft $ and $ u = \trivtree $, we have $ t \tjoin u = \extree $.

Using grading trees, we generalise the notion of graded algebra.
Usual graded algebras are parametrised by natural numbers, while our new type of algebras are parametrised by grading trees.

\begin{definition} \label{deftga}
  Suppose that a module $ A ^t $ is given for each grading tree $ t $.
  Their direct sum
  \begin{gather*}
    A \coloneqq \bigoplus _t A ^t
  \end{gather*}
  is called a \textem{tree-graded algebra} when it is equipped with a homomorphism $ \ajoin \cl A ^t \otimes A ^u \to A ^{t \tjoin u} $ for any pair of grading trees $ (t, u) $.
\end{definition}

\begin{definition}
  For two tree-graded algebras $ A $ and $ B $, a homomorphism $ f \cl A \to B $ is called \textem{tree-graded} when we have $ f (A ^t) \subseteq B ^t $ for any grading tree $ t $.
\end{definition}

Starting from any module $ V $, we can construct a tree-graded algebra.
The construction is similar to usual tensor algebra.

\begin{definition}
  Take a module $ V $.
  For each grading tree $ t $, define
  \begin{gather*}
    \treet ^t V \coloneqq V ^{\otimes {\leaf (t)}} = \underbrace{V \otimes \cdots \otimes V} _{\leaf (t) \text{ times}},
  \end{gather*}
  where $ \leaf (t) $ denotes the number of leaves in $ t $.
  Moreover, for two grading trees $ t $ and $ u $, let
  \begin{gather*}
    \begin{array}{r@{}r@{}c@{}l}
      \ajoin \cl {} & \treet ^t V \otimes \treet ^u V & {} \longrightarrow {} & \treet ^{t \tjoin u} V \\
      & (v _1 \otimes \cdots \otimes v _n) \otimes (w _1 \otimes \cdots \otimes w _m) & {} \longmapsto {} & v _1 \otimes \cdots \otimes v _n \otimes w _1 \otimes \cdots \otimes w _m.
    \end{array}
  \end{gather*}
  With this homomorphism, the direct sum
  \begin{gather*}
    \treet V \coloneqq \bigoplus _t \treet ^t V
  \end{gather*}
  is a tree-graded algebra, which we call a \textem{tree-tensor algebra} of $ V $.
\end{definition}

Note that, unlike that of a usual tensor algebra, the operation $ \ajoin $ defined above is not (and never) associative, since $ \treet ^{(t \tjoin u) \tjoin v} V \ne \treet ^{t \tjoin (u \tjoin v)} V \subseteq \treet V $ for grading trees $ t $, $ u $, $ v $.

Now consider in particular the case $ V = \bbK $.
Since we have
\begin{gather*}
  \treet \bbK = \bigoplus _t \treet ^t \bbK = \bigoplus _t \bbK ^{\otimes {\leaf (t)}} \cong \bigoplus _t \bbK,
\end{gather*}
the tree-tensor algebra $ \treet \bbK $ is the free module generated by all grading trees.
This means that, to define a tree-graded homomorphism $ f \cl \treet \bbK \to B $ where $ B $ is some tree-graded algebra, we only need to determine $ f (t) \in B ^t $ for each grading tree $ t $.

Given a module $ V $, our cofree precoalgebra over $ V $ is presented as a submodule of $ \op{Hom} (\treet \bbK, \treet (V \oplus \bbK)) $, whose elements are tree-graded ones generated by another type of trees, which we call generating trees:

\begin{definition} \label{defgnt}
  For a module $ V $, a \textem{generating tree} for $ V $ is an infinite tree $ \sigma $ which satisfies the following two conditions:
  \begin{itemize}
    \item each node of $ \sigma $ has an even number of two or more children; if the node has exactly $ 2 n \ssp (n \geq 1) $ children, they are named $ \rmL (1) $, $ \ocdots $, $ \rmL (n) $, $ \rmR (1) $, $ \ocdots $, $ \rmR (n) $ in order from left to right;
    \item each node of $ \sigma $ is labelled by an element of $ V \oplus \bbK $.
  \end{itemize}
\end{definition}

Each node of a generating tree $ \sigma $ can be uniquely specified by a finite sequence of symbols of the form $ \rmL (i) $ or $ \rmR (i) $, by reading the name of the child successively from the root; the root node is specified by the empty sequence $ \empseq $ (see Figure \ref{gentree}).
We call such a finite sequence a \textem{position sequence}, and often identify it with the node specified by it.
For a position sequence $ r $, we write $ \sigma \lP r \rP $ for the label attached to the node specified by $ r $, and $ \sigma \untree _r $ for the subtree under that node (so the root of $ \sigma \untree _r $ is the node of $ \sigma $ specified by $ r $).

\begin{figure}[tbp]
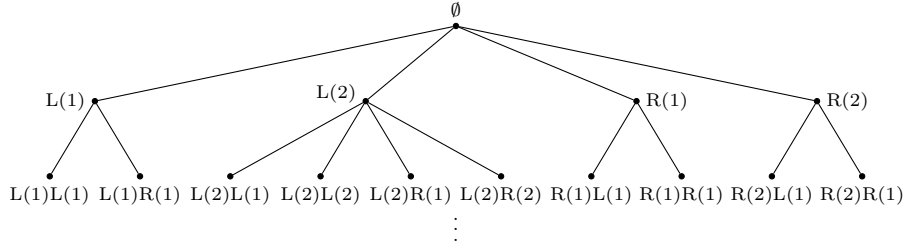

  \centering
  \tikz{
    \path (5.4, 0) node [point] (O) {} node [above] {\scriptsize$ \empseq $};
    \path (0.6, -1) node [point] (L1) {} node [left] {\scriptsize$ \rmL (1) $};
    \path (4.2, -1) node [point] (L2) {} ++ (0, 0.1) node [left] {\scriptsize$ \rmL (2) $};
    \path (7.8, -1) node [point] (R1) {} node [right] {\scriptsize$ \rmR (1) $};
    \path (10.2, -1) node [point] (R2) {} node [right] {\scriptsize$ \rmR (2) $};
    \path (0, -2) node [point] (L1L1) {} node [below] {\scriptsize$ \rmL (1) \rmL (1)$};
    \path (1.2, -2) node [point] (L1R1) {} node [below] {\scriptsize$ \rmL (1) \rmR (1) $};
    \path (2.4, -2) node [point] (L2L1) {} node [below] {\scriptsize$ \rmL (2) \rmL (1)$};
    \path (3.6, -2) node [point] (L2L2) {} node [below] {\scriptsize$ \rmL (2) \rmL (2) $};
    \path (4.8, -2) node [point] (L2R1) {} node [below] {\scriptsize$ \rmL (2) \rmR (1)$};
    \path (6, -2) node [point] (L2R2) {} node [below] {\scriptsize$ \rmL (2) \rmR (2) $};
    \path (7.2, -2) node [point] (R1L1) {} node [below] {\scriptsize$ \rmR (1) \rmL (1)$};
    \path (8.4, -2) node [point] (R1R1) {} node [below] {\scriptsize$ \rmR (1) \rmR (1) $};
    \path (9.6, -2) node [point] (R2L1) {} node [below] {\scriptsize$ \rmR (2) \rmL (1)$};
    \path (10.8, -2) node [point] (R2R1) {} node [below] {\scriptsize$ \rmR (2) \rmR (1) $};
    \draw (O) -- (L1); \draw (O) -- (L2); \draw (O) -- (R1); \draw (O) -- (R2);
    \draw (L1) -- (L1L1); \draw (L1) -- (L1R1);
    \draw (L2) -- (L2L1); \draw (L2) -- (L2L2); \draw (L2) -- (L2R1); \draw (L2) -- (L2R2);
    \draw (R1) -- (R1L1); \draw (R1) -- (R1R1);
    \draw (R2) -- (R2L1); \draw (R2) -- (R2R1);
    \path (5.4, -2.6) node {\scriptsize$ \vdots $}
  }
  \caption{Generating tree and position sequences} \label{gentree}
\end{figure}

Here we see how a generating tree generates a tree-graded homomorphism.

\begin{definition} \label{defgenhom}
  Given a generating tree $ \sigma $ for a module $ V $, define a tree-graded homomorphism $ f \cl \treet \bbK \to \treet (V \oplus \bbK) $, which is said to be \textem{generated} by $ \sigma $, as follows.
  First set $ f (\trivtree) \coloneqq \sigma \lP \empseq \rP $.
  For a grading tree $ t $ other than $ \trivtree $, we define $ f (t) $ in the following way.

  Just like the naming of children in a generating tree, call the left child $ \rmL $ and the right child $ \rmR $.
  Then every node of $ t $ is uniquely specified by a finite sequence of $ \rmL $ or $ \rmR $.
  Let $ p _1, \ocdots, p _n $ be all the leaves of $ t $ in order from left to right.
  For each inner node $ q $ in $ t $, prepare a variable symbol $ \iota \lP q \rP $ which ranges over positive integers.
  
  For $ 1 \leq k \leq n $, write $ p _k \eqqcolon \xi _{k, 1} \cdots \xi _{k, m _k} $ where each $ \xi _{k, l} $ is either $ \rmL $ or $ \rmR $.
  Set
  \begin{gather*}
    r _k \coloneqq \xi _{k, 1} (\iota \lP \empseq \rP) \, \xi _{k, 2} (\iota \lP \xi _{k, 1} \rP) \cdots \xi _{k, m _k} (\iota \lP \xi _{k, 1} \cdots \xi _{k, m _k - 1} \rP),
  \end{gather*}
  which is a finite sequence of symbols of the form $ \rmL (i) $ or $ \rmR (i) $.
  Now define
  \begin{gather*}
    f (t) \coloneqq \sum _\iota (\sigma \lP r _1 \rP \otimes \sigma \lP r _2 \rP \otimes \cdots \otimes \sigma \lP r _n \rP),
  \end{gather*}
  where the sum ranges over $ (r _1, r _2, \ocdots, r _n) $ such that every $ r _k \ssp (1 \leq k \leq n) $ specifies some nodes in  $ \sigma $ when $ \iota \lP q \rP $'s run through positive integers.
  This sum is obviously finite, and thus determines an element of $ \treet ^t (V \oplus \bbK) $.
\end{definition}

For example, consider the case $ t = \extree $.
Since $ t $ has three leaves $ p _1 \coloneqq \mathrm{LL} $, $ p _2 \coloneqq \mathrm{LR} $, $ p _3 \coloneqq \rmR $, we have
\begin{align*}
  r _1 & = \rmL (\iota \lP \empseq \rP) \rmL (\iota \lP \rmL \rP) \\
  r _2 & = \rmL (\iota \lP \empseq \rP) \rmR (\iota \lP \rmL \rP) \\
  r _3 & = \rmR (\iota \lP \empseq \rP).
\end{align*}
Thus the value $ f (t) $ is defined as
\begin{align*}
  f (t) & = \sum _\iota (\sigma \lP r _1 \rP \otimes \sigma \lP r _2 \rP  \otimes \sigma \lP r _3 \rP) \\
  & = \sum _\iota (\sigma \lP \rmL (\iota \lP \empseq \rP) \rmL (\iota \lP \rmL \rP) \rP \otimes \sigma \lP \rmL (\iota \lP \empseq \rP) \rmR (\iota \lP \rmL \rP) \rP \otimes \sigma \lP \rmR (\iota \lP \empseq \rP) \rP).
\end{align*}
Rewriting $ i \coloneqq \iota \lP \empseq \rP $ and $ j \coloneqq \iota \lP \rmL \rP $ for readability, this can be also equivalently presented as
\begin{gather*}
  f (t) = \sum _{i, j} (\sigma \lP \rmL (i) \rmL (j) \rP \otimes \sigma \lP \rmL (i) \rmR (j) \rP \otimes \sigma \lP \rmR (i) \rP).
\end{gather*}
If the generating tree is that given by Figure \ref{gentree}, this sum is expanded as
\begin{align*}
  f (t) = {} & \sigma \lP \rmL (1) \rmL (1) \rP \otimes \sigma \lP \rmL (1) \rmR (1) \rP \otimes \sigma \lP \rmR (1) \rP \\
  & {} + \sigma \lP \rmL (2) \rmL (1) \rP \otimes \sigma \lP \rmL (2) \rmR (1) \rP \otimes \sigma \lP \rmR (2) \rP \\
  & {} + \sigma \lP \rmL (2) \rmL (2) \rP \otimes \sigma \lP \rmL (2) \rmR (2) \rP \otimes \sigma \lP \rmR (2) \rP.
\end{align*}

Here note that the range through which $ \iota \lP q \rP $ runs may depend on $ \iota \lP q' \rP $ for shorter $ q' $.
In the example above, $ \iota \lP \rmL \rP $ (which is also written as $ j $) can only be 1 when $ \iota \lP \empseq \rP $ (also written as $ i $) is equal to 1, while $ \iota \lP \rmL \rP $ can be 1 and 2 when $ \iota \lP \empseq \rP $ is 2.

\begin{definition}
  For a module $ V $, a homomorphism $ f \cl \treet \bbK \to \treet (V \oplus \bbK) $ is said to be \textem{representative} when $ f $ is generated by some generating tree for $ V $\footnotemark.
  Let $ \rep{V} \subseteq \op{Hom} (\treet \bbK, \treet (V \oplus \bbK)) $ denote the set of all representative homomorphisms.
\end{definition}
\footnotetext{The term ``representative'' is borrowed by Block--Leroux, but its meaning here is not the same as the original.}

Through the rest of this subsection, we will prove that $ \rep{V} $ is the cofree precoalgebra over every module $ V $.
First we will show that $ \rep{V} $ is a module.

\begin{lemma} \label{submod}
  For a module $ V $, the set $ \rep{V} $ is a submodule of $ \op{Hom} (\treet \bbK, \treet (V \oplus \bbK)) $.
\end{lemma}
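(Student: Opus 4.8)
The plan is to verify that $\rep{V}$ is non-empty and closed under addition and scalar multiplication; in each case I would do this by writing down an explicit generating tree whose generated homomorphism is the required one. All three constructions either only relabel the nodes of a given generating tree or graft two generating trees together at the root, so it will be immediate that the output is again a generating tree in the sense of \cref{defgnt} (infinite, each node with an even number $\ge 2$ of children, each node labelled in $V \oplus \bbK$); the content lies in checking that the generated homomorphism comes out right, which I would do using the explicit formula from \cref{defgenhom}.

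For the zero homomorphism, I would take any generating tree $\sigma$ all of whose labels equal $0 \in V \oplus \bbK$ --- for instance the infinite binary tree with zero labels. Then $ f (\trivtree) = \sigma \lP \empseq \rP = 0 $, and for $ t \ne \trivtree $ every summand $ \sigma \lP r _1 \rP \otimes \cdots \otimes \sigma \lP r _n \rP $ of $ f (t) $ vanishes since each of its factors does, so $ \sigma $ generates the zero homomorphism. In particular $ \rep{V} $ is non-empty and contains $ 0 $.

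For scalar multiplication by $ c \in \bbK $, suppose $ f $ is generated by $ \sigma $, and let $ \sigma ' $ be the generating tree with the same underlying tree as $ \sigma $ but with the label of a position sequence $ r $ replaced by $ c \, \sigma \lP r \rP $ whenever $ r $ is empty or consists only of symbols of the form $ \rmL (i) $, and left equal to $ \sigma \lP r \rP $ otherwise; write $ f ' $ for the homomorphism generated by $ \sigma ' $. The key point is that for a grading tree $ t \ne \trivtree $ the leftmost leaf of $ t $ is $ p _1 = \rmL \cdots \rmL $, so $ r _1 $ consists only of symbols $ \rmL (i) $, whereas every other leaf $ p _k \ssp (k \ge 2) $ contains at least one $ \rmR $, so the corresponding $ r _k $ contains a symbol $ \rmR (i) $ and its label is left unchanged. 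Hence each summand of $ f ' (t) $ is the corresponding summand of $ f (t) $ with its first tensor factor --- and therefore the whole summand --- multiplied by $ c $; thus $ f ' (t) = c \, f (t) $, and likewise $ f ' (\trivtree) = c \, \sigma \lP \empseq \rP = c \, f (\trivtree) $. Therefore $ c f = f ' \in \rep{V} $.

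For addition, suppose $ f $ and $ g $ are generated by $ \sigma $ and $ \tau $, whose roots have $ 2 a $ and $ 2 b $ children respectively. I would build a generating tree $ \rho $ whose root has $ 2 (a + b) $ children, taking $ \rmL (1), \ocdots, \rmL (a) $ and $ \rmR (1), \ocdots, \rmR (a) $ of $ \rho $ together with their subtrees to be those of $ \sigma $, taking $ \rmL (a + 1), \ocdots, \rmL (a + b) $ and $ \rmR (a + 1), \ocdots, \rmR (a + b) $ of $ \rho $ together with their subtrees to be the $ \rmL $- and $ \rmR $-children of $ \tau $ with indices shifted by $ a $, and setting $ \rho \lP \empseq \rP \coloneqq \sigma \lP \empseq \rP + \tau \lP \empseq \rP $; write $ h $ for the homomorphism generated by $ \rho $. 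Then $ h (\trivtree) = f (\trivtree) + g (\trivtree) $, and for $ t \ne \trivtree $ I would split the sum defining $ h (t) $ according to the value $ i \coloneqq \iota \lP \empseq \rP $: the symbol at the first position of every $ r _k $ is $ \rmL (i) $ or $ \rmR (i) $ for this common $ i $, and $ \rho \untree _{\rmL (i)} $, $ \rho \untree _{\rmR (i)} $ are exactly the corresponding subtrees of $ \sigma $ when $ i \le a $ and of $ \tau $ (reindexed) when $ i > a $, so the constraints on the remaining variables $ \iota \lP q \rP $ and the labels $ \sigma \lP r _k \rP $ in each range coincide with those in the formulas for $ f (t) $ and $ g (t) $; hence the part of $ h (t) $ with $ i \le a $ equals $ f (t) $ and the part with $ i > a $ equals $ g (t) $, giving $ h = f + g $ and $ f + g \in \rep{V} $. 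I expect this last verification to be the main obstacle: one must check carefully that fixing $ \iota \lP \empseq \rP $ decouples the left and right parts of $ t $, restricts the remaining summation exactly as in \cref{defgenhom}, and is unaffected by the grafting. The zero and scalar-multiplication cases are routine once the correct generating tree has been written down.
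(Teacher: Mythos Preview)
Your proposal is correct and follows essentially the same route as the paper: the paper defines exactly your grafted tree $\rho$ (there written $\sigma + \tau$) and exactly your relabelled tree $\sigma'$ (there written $\lambda\sigma$), and then asserts without further detail that these generate $f+g$ and $\lambda f$. Your write-up in fact supplies more justification than the paper does---the splitting of the sum over $\iota\lP\empseq\rP$ and the observation that only $r_1$ consists purely of $\rmL$-symbols---and also treats the zero case explicitly, which the paper omits.
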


\begin{proof}
  Take two generating trees $ \sigma $ and $ \tau $.
  We define a new tree $ \sigma + \tau $ in the following way. 
  First set $ (\sigma + \tau) \lP \empseq \rP \coloneqq \sigma \lP \empseq \rP + \tau \lP \empseq \rP $.
  For a nonempty position sequence $ r $ and $ s $ of $ \sigma $ and $ \tau $ respectively, write $ r \eqqcolon \xi (i) r' $ and $ s \eqqcolon \eta (j) s' $ where $ \xi $ is either $ \rmL $ or $ \rmR $ and so is $ \eta $.
  Set 
  \begin{align*}
    (\sigma + \tau) \lP \xi (i) r' \rP & \coloneqq \sigma \lP \xi (i) r' \rP \\
    (\sigma + \tau) \lP \eta (j + n) s' \rP & \coloneqq \tau \lP \eta (j) s' \rP,
  \end{align*}
  where $ n $ is half the number of the children of the root of $ \sigma $.
  In other words, $ \sigma + \tau $ is the tree obtained by the following procedure:
  \begin{itemize}
    \item remove the root of $ \sigma $ to obtain $ \sigma \untree _{\rmL (1)}, \ocdots, \sigma \untree _{\rmL (n)} $, $ \sigma \untree _{\rmR (1)}, \ocdots, \sigma \untree _{\rmR (n)} $;
    \item remove the root of $ \tau $ to obtain $ \tau \untree _{\rmL (1)}, \ocdots, \tau \untree _{\rmL (m)} $, $ \tau \untree _{\rmR (1)}, \ocdots, \tau \untree _{\rmR (m)} $;
    \item create a new root which is labelled $ \sigma \lP \empseq \rP + \tau \lP \empseq \rP $;
    \item create edges from the new root to $ \sigma \untree _{\rmL (1)}, \ocdots, \sigma \untree _{\rmL (n)} $, $ \tau \untree _{\rmL (1)}, \ocdots, \tau \untree _{\rmL (m)} $, $ \sigma \untree _{\rmR (1)}, \ocdots, \sigma \untree _{\rmR (n)} $, $ \tau \untree _{\rmR (1)}, \ocdots, \tau \untree _{\rmR (m)} $ in order from left to right.
  \end{itemize}

  Next take a generating tree $ \sigma $ and a scalar $ \lambda $.
  To define a new tree $ \lambda \sigma $, set for each position sequence $ r $ of $ \sigma $,
  \begin{gather*}
    (\lambda \sigma) \lP r \rP \coloneqq
    \begin{cases}
      \lambda \, \sigma \lP r \rP & (\text{$ r $ is of the form $ \rmL (i _1) \cdots \rmL (i _a) $, $ a \geq 0 $}) \\
      \sigma \lP r \rP & (\text{otherwise}).
    \end{cases}
  \end{gather*}

  If $ \sigma $ and $ \tau $ generate $ f $ and $ g $ respectively, then $ \sigma + \tau $ generates $ f + g $.
  Similarly, if $ \sigma $ generates $ f $, then $ \lambda \sigma $ generates $ \lambda f $.
  This proves the lemma.
\end{proof}

Now recall the following well-known fact:

\begin{proposition} \label{teninj}
  For modules $ C $ and $ D $, the homomorphism
  \begin{gather*}
    \begin{array}{r@{}r@{}c@{}l}
      \varPsi \cl {} & \op{Hom} (C, D) ^{\otimes n} & {} \longrightarrow {} & \op{Hom} (C ^{\otimes n}, D ^{\otimes n}) \\
      & g _1 \otimes \cdots \otimes g _n & {} \longmapsto {} & [ c _1 \otimes \cdots \otimes c _n \longmapsto g _1 (c _1) \otimes \cdots \otimes g _n (c _n) ]
    \end{array}
  \end{gather*}
  is injective.
\end{proposition}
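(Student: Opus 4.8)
The plan is to reduce $\varPsi$, using left exactness of $\op{Hom}(-, D)$, to the case of a free module $C$, where it becomes a map between iterated products, and then to run the classical fact that $\op{Hom}(U, W) \otimes \op{Hom}(U', W') \to \op{Hom}(U \otimes U', W \otimes W')$ is injective for vector spaces; the delicate point will be carrying that argument out over an arbitrary $\bbK$.

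\emph{Reduction to free $C$.} Pick a surjection $F \twoheadrightarrow C$ with $F$ free. Since $\op{Hom}(-, D)$ is left exact this gives an injection $\op{Hom}(C, D) \hookrightarrow \op{Hom}(F, D)$, and since $F^{\otimes n} \twoheadrightarrow C^{\otimes n}$ it also gives $\op{Hom}(C^{\otimes n}, D^{\otimes n}) \hookrightarrow \op{Hom}(F^{\otimes n}, D^{\otimes n})$; these fit into a commutative square whose horizontal arrows are $\varPsi$ for $C$ and for $F$. For $F = \bbK^{(I)}$ we have $\op{Hom}(F, D) = D^{I}$ and $\op{Hom}(F^{\otimes n}, D^{\otimes n}) = (D^{\otimes n})^{I^{n}}$, and $\varPsi$ becomes the obvious map $(D^{I})^{\otimes n} \to (D^{\otimes n})^{I^{n}}$ sending $(d^{(1)}_{i})_{i} \otimes \cdots \otimes (d^{(n)}_{i})_{i}$ to $(d^{(1)}_{i_{1}} \otimes \cdots \otimes d^{(n)}_{i_{n}})_{i_{1}, \dots, i_{n}}$. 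In the application only this free case is needed, since $C = \treet\bbK$.

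\emph{The free case.} Let $\omega = \sum_{k=1}^{m} g^{(k)}_{1} \otimes \cdots \otimes g^{(k)}_{n}$ lie in $\ker\varPsi$ with $m$ minimal. Feeding the basis elements $e_{i_{1}} \otimes \cdots \otimes e_{i_{n}}$ of $C^{\otimes n}$ into $\varPsi(\omega) = 0$ yields $\sum_{k} g^{(k)}_{1}(e_{i_{1}}) \otimes \cdots \otimes g^{(k)}_{n}(e_{i_{n}}) = 0$ in $D^{\otimes n}$ for every multi-index. Over a field one finishes as usual: minimality makes the families $\{g^{(k)}_{1}\}_{k}$ and $\{g^{(k)}_{2} \otimes \cdots \otimes g^{(k)}_{n}\}_{k}$ linearly independent; one locates finitely many coordinates whose restriction still separates $\{g^{(k)}_{1}\}_{k}$, builds out of them the functional dual to $g^{(1)}_{1}$, applies it to peel that term off, and reduces $m$, forcing $m = 0$; an induction on $n$ (using that tensoring with $\op{Hom}(C, D)$ preserves injections over a field) disposes of the remaining factors. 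This is the vector-space statement above with $U = C^{\otimes i}$, $W = D^{\otimes i}$.

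\emph{Main obstacle.} The real difficulty is that $\op{Hom}(C, D)^{\otimes n}$ is built using $\otimes$, which is not left exact over a general commutative ring: the left-hand vertical of the reduction square, being a tensor power of an injection, need not be injective; a minimal representation of $\omega$ need not have linearly independent factors; and ``a finite set of coordinates separates a linearly independent family'' fails without a field. I therefore expect that, in the generality written, the statement needs an extra hypothesis making the relevant $\op{Hom}$-modules flat (for instance $D$ flat, or $\bbK$ coherent so that $D^{I}$ is flat), or else must be proved by exploiting the concreteness and freeness of $\treet\bbK$ directly; in any case it is worth double-checking, since already for $\bbK = \mathbb{Z}$ and $D = \mathbb{Q}/\mathbb{Z}$ we have $D^{\otimes 2} = 0$—so the target of $\varPsi$ vanishes when $n = 2$—whereas $\op{Hom}(C, D)^{\otimes 2}$ need not vanish for $C$ free of infinite rank.
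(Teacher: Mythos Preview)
The paper gives no proof of this proposition: it is introduced with the phrase ``Now recall the following well-known fact'' and then used without argument. So there is nothing to compare your attempt against on the paper's side.

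More importantly, your closing observation is not merely an obstacle to the proof strategy---it is a genuine counterexample to the proposition as stated. With $\bbK = \mathbb{Z}$, $D = \mathbb{Q}/\mathbb{Z}$, $n = 2$, and $C$ free of countably infinite rank, one has $D \otimes D = 0$ (divisible and torsion), hence $\op{Hom}(C^{\otimes 2}, D^{\otimes 2}) = 0$; but $\op{Hom}(C,D) \cong (\mathbb{Q}/\mathbb{Z})^{\mathbb{N}}$ is not torsion (e.g.\ the sequence $(1/2, 1/4, 1/8, \ldots)$ has infinite order), so its quotient by torsion is a nonzero $\mathbb{Q}$-vector space, and right exactness of $\otimes$ gives $\op{Hom}(C,D)^{\otimes 2} \neq 0$. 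Thus $\varPsi$ cannot be injective. Your instinct that an extra hypothesis is needed is therefore correct, and the ``well-known fact'' is simply false over a general commutative ring.

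Your reduction-to-free-$C$ step is fine and is in fact all the paper ever needs, since in every application $C = \treet\bbK$ is free; but as your counterexample shows, freeness of $C$ alone does not save the statement. What would save the paper's actual uses is an argument specific to the particular $D = \treet(V \oplus \bbK)$ appearing there, or a flatness hypothesis on the relevant $\op{Hom}$-modules---neither of which the paper supplies. You have located a real gap.
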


To define a precoalgebra structure on $ \rep{V} $, we need the following three lemmata:

\begin{lemma} \label{inj}
  For a module $ V $, the homomorphism defined by
  \begin{gather*}
    \begin{array}{r@{}r@{}c@{}l}
      \varPhi \cl {} & \op{Hom} (\treet \bbK, \treet (V \oplus \bbK)) ^{\otimes 2} & {} \longrightarrow {} & \op{Hom} (\treet \bbK, \treet (V \oplus \bbK)) \\
      & g \otimes h & {} \longmapsto {} & \lb
      \begin{array}{@{\,}r@{}c@{}l@{\,}}
        \trivtree & {} \longmapsto {} & 0 \\
        t \tjoin u & {} \longmapsto {} & g (t) \ajoin h (u)
      \end{array}
      \rb
    \end{array}
  \end{gather*}
  is injective.
\end{lemma}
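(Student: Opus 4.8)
The plan is to reduce everything to \cref{teninj}. Applying that proposition with $ n = 2 $, $ C = \treet \bbK $ and $ D = \treet (V \oplus \bbK) $ gives injectivity of
\[
  \varPsi \cl \op{Hom} (\treet \bbK, \treet (V \oplus \bbK)) ^{\otimes 2} \longrightarrow \op{Hom} ((\treet \bbK) ^{\otimes 2}, (\treet (V \oplus \bbK)) ^{\otimes 2}), \qquad g \otimes h \longmapsto [\, c \otimes d \mapsto g (c) \otimes h (d) \,].
\]
So it suffices to show $ \ker \varPhi \subseteq \ker \varPsi $. (Along the way one records that $ \varPhi $ is genuinely well defined: the grading trees freely generate $ \treet \bbK $ and split into $ \trivtree $ together with the trees of the form $ t \tjoin u $, so the stated formula determines a homomorphism for each $ g \otimes h $; the resulting assignment $ (g, h) \mapsto \varPhi (g \otimes h) $ is $ \bbK $-bilinear, hence factors through the tensor square.)

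The key auxiliary fact I would establish is that the \emph{total join}
\[
  \ajoin \cl \treet (V \oplus \bbK) \otimes \treet (V \oplus \bbK) \longrightarrow \treet (V \oplus \bbK),
\]
which sends $ \treet ^t (V \oplus \bbK) \otimes \treet ^u (V \oplus \bbK) $ into $ \treet ^{t \tjoin u} (V \oplus \bbK) $ by $ \ajoin $, is injective. The point is that every grading tree other than $ \trivtree $ decomposes \emph{uniquely} as $ t \tjoin u $ — one recovers $ t $ and $ u $ as the left and right subtrees of the root — so $ (t, u) \mapsto t \tjoin u $ is a bijection onto the nontrivial grading trees. Hence the total join is the direct sum, over pairs $ (t, u) $, of its component maps, and these land in pairwise distinct summands of the target. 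Each component map is the canonical isomorphism $ (V \oplus \bbK) ^{\otimes \leaf (t)} \otimes (V \oplus \bbK) ^{\otimes \leaf (u)} \cong (V \oplus \bbK) ^{\otimes \leaf (t \tjoin u)} $, so the total join is injective (indeed an isomorphism onto the submodule $ \bigoplus _{w \ne \trivtree} \treet ^w (V \oplus \bbK) $).

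Granting this, I would conclude as follows. Take $ x \in \ker \varPhi $ and write $ x = \sum _i g _i \otimes h _i $. For any grading trees $ t $ and $ u $,
\[
  \ajoin \bigl( \varPsi (x) (t \otimes u) \bigr) = \sum _i g _i (t) \ajoin h _i (u) = \bigl( \varPhi (x) \bigr) (t \tjoin u) = 0,
\]
so $ \varPsi (x) (t \otimes u) = 0 $ by injectivity of the total join. Since $ \treet \bbK $ is free on the grading trees, the elements $ t \otimes u $ span $ (\treet \bbK) ^{\otimes 2} $, so $ \varPsi (x) = 0 $, and therefore $ x = 0 $ by \cref{teninj}.

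I do not expect a genuine obstacle here: the argument is bookkeeping once the injectivity of the total join is in hand, and that is immediate from unique decomposition of grading trees. The only subtlety worth flagging is that $ \varPsi (x) $ gets evaluated on the possibly non-tree-homogeneous elements $ g _i (t), h _i (u) \in \treet (V \oplus \bbK) $ — but this is harmless precisely because the total join was proved injective on all of $ \treet (V \oplus \bbK) \otimes \treet (V \oplus \bbK) $, not merely on its homogeneous part.
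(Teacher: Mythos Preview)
Your proof is correct and takes essentially the same approach as the paper: both rely on \cref{teninj} and the injectivity of the total join $\ajoin$ (which the paper denotes $\psi$ and declares ``clearly injective'', while you spell out the reason via unique decomposition of nontrivial grading trees). The only cosmetic difference is that the paper factors $\varPhi$ explicitly as a composite $\varPsi_3 \circ \varPsi_2 \circ \varPsi_1$ of three injective maps, whereas you package the same content as the kernel inclusion $\ker \varPhi \subseteq \ker \varPsi$.
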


\begin{proof}
  By \cref{teninj}, the morphism
  \begin{gather*}
    \begin{array}{r@{}r@{}c@{}l}
      \varPsi _1 \cl {} & \op{Hom} (\treet \bbK, \treet (V \oplus \bbK)) ^{\otimes 2} & {} \longrightarrow {} & \op{Hom} (\treet \bbK \otimes \treet \bbK, \treet (V \oplus \bbK) \otimes \treet (V \oplus \bbK)) \\
      & g \otimes h & {} \longmapsto {} & [ t \otimes u \longmapsto g (t) \otimes h (u) ]
    \end{array}
  \end{gather*}
  is injective.
  In addition,
  \begin{gather*}
    \begin{array}{r@{}r@{}c@{}l}
      \varPsi _2 \cl {} & \op{Hom} (\treet \bbK \otimes \treet \bbK, \treet (V \oplus \bbK) \otimes \treet (V \oplus \bbK)) & {} \longrightarrow {} & \op{Hom} (\treet \bbK, \treet (V \oplus \bbK) \otimes \treet (V \oplus \bbK)) \\
      & p & {} \longmapsto {} & \lb
      \begin{array}{@{\,}r@{}c@{}l@{\,}}
        \trivtree & {} \longmapsto {} & 0 \\
        t \tjoin u & {} \longmapsto {} & p (t \otimes u)
      \end{array}
      \rb
    \end{array}
  \end{gather*}
  is also clearly injective.
  Now consider
  \begin{gather*}
    \begin{array}{r@{}r@{}c@{}l}
      \psi \cl {} & \treet (V \oplus \bbK) \otimes \treet (V \oplus \bbK) & {} \longrightarrow {} & \treet (V \oplus \bbK) \\
      & a \otimes b & {} \longmapsto {} & a \ajoin b,
    \end{array}
  \end{gather*}
  which induces
  \begin{gather*}
    \begin{array}{r@{}r@{}c@{}l}
      \varPsi _3 \cl {} & \op{Hom} (\treet \bbK, \treet (V \oplus \bbK) \otimes \treet (V \oplus \bbK)) & {} \longrightarrow {} & \op{Hom} (\treet \bbK, \treet (V \oplus \bbK)) \\
      & q & {} \longmapsto {} & \psi \circ q.
    \end{array}
  \end{gather*}
  Since $ \psi $ is clearly injective, so is $ \varPsi _3 $.
  Thus the composite $ \varPhi = \varPsi _3 \circ \varPsi _2 \circ \varPsi _1 $ is shown to be injective.
\end{proof}

\begin{lemma} \label{split}
  For a module $ V $ and a representative homomorphism $ f \cl \treet \bbK \to \treet (V \oplus \bbK) $, there exist two finite families $ (g _i) _{i \in I} $ and $ (h _i) _{i \in I} $ of representative homomorphisms such that, for any grading trees $ t $ and $ u $, we have
  \begin{gather*}
    f (t \tjoin u) = \sum _{i \in I} (g _i (t) \ajoin h _i (u)).
  \end{gather*}
\end{lemma}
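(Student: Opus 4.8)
The plan is to extract the two families directly from a generating tree for $ f $. Fix a generating tree $ \sigma $ for $ V $ that generates $ f $, and suppose the root of $ \sigma $ has $ 2 n $ children $ \rmL (1), \ocdots, \rmL (n), \rmR (1), \ocdots, \rmR (n) $. Put $ I \coloneqq \{ 1, \ocdots, n \} $ and, for $ i \in I $, let $ g _i $ be the homomorphism generated by the subtree $ \sigma \untree _{\rmL (i)} $ and $ h _i $ the homomorphism generated by $ \sigma \untree _{\rmR (i)} $; both are representative by construction, so it suffices to prove $ f (t \tjoin u) = \sum _{i \in I} (g _i (t) \ajoin h _i (u)) $ for all grading trees $ t $ and $ u $. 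I would do this by unwinding \cref{defgenhom} on both sides.

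The combinatorial input is the description of $ t \tjoin u $ in terms of $ t $ and $ u $: its leaves, read from left to right, are the leaves of $ t $ each with $ \rmL $ prepended followed by the leaves of $ u $ each with $ \rmR $ prepended (when $ t = \trivtree $ this contributes the single leaf $ \rmL $, and likewise for $ u $), and its inner nodes are $ \empseq $ together with the inner nodes of $ t $ with $ \rmL $ prepended and those of $ u $ with $ \rmR $ prepended. Running the recipe of \cref{defgenhom} for $ f (t \tjoin u) $ and writing $ i _0 $ for the variable $ \iota \lP \empseq \rP $ at the root, the position sequence associated with a leaf inherited from $ t $ has the form $ \rmL (i _0) \, r' $, where $ r' $ is precisely the position sequence that the recipe for $ \sigma \untree _{\rmL (i _0)} $ would assign to the corresponding leaf of $ t $ once one renames $ \iota \lP \empseq \rP \mapsto \iota \lP \rmL \rP $ and $ \iota \lP q \rP \mapsto \iota \lP \rmL q \rP $; moreover $ \sigma \lP \rmL (i _0) \, r' \rP = \sigma \untree _{\rmL (i _0)} \lP r' \rP $ by the very definition of a subtree. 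The analogous statement holds for the leaves inherited from $ u $, with $ \rmR $ in place of $ \rmL $ and using the same root variable $ i _0 $.

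It remains to reorganise the sum. In \cref{defgenhom}, $ f (t \tjoin u) $ is summed over the assignments $ \iota $ for which every resulting position sequence specifies a node of $ \sigma $; I would first sum over the value $ i _0 $ of $ \iota \lP \empseq \rP $ and then over everything else. For fixed $ i _0 $, validity of the position sequences coming from $ t $ involves only $ i _0 $ and the variables $ \iota \lP \rmL q \rP $, and validity of those coming from $ u $ involves only $ i _0 $ and the variables $ \iota \lP \rmR q \rP $; hence, once $ i _0 $ is fixed, the inner sum factors as the $ \ajoin $ of a sum over the ``left'' variables and a sum over the ``right'' variables. By the previous paragraph the left factor equals $ g _{i _0} (t) $ and the right factor equals $ h _{i _0} (u) $ — the case $ t = \trivtree $ being the degenerate one where the only left leaf sequence is $ \rmL (i _0) $ and the left factor is $ \sigma \untree _{\rmL (i _0)} \lP \empseq \rP = g _{i _0} (\trivtree) $, and symmetrically for $ u $. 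Finally, $ \rmL (i _0) $ — equivalently $ \rmR (i _0) $ — is a child of the root of $ \sigma $ exactly when $ i _0 \in I $, so values of $ i _0 $ outside $ I $ contribute nothing, and summing over $ i _0 \in I $ yields the claimed identity. I expect the only real difficulty to be notational: keeping the leaves of $ t \tjoin u $, the position sequences in $ \sigma $, and the integer variables $ \iota \lP q \rP $ in register across the decomposition, together with the trivial-tree edge cases.
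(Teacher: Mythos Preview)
Your proposal is correct and follows exactly the paper's approach: both take a generating tree $\sigma$ for $f$, set $g_i$ and $h_i$ to be the homomorphisms generated by $\sigma\untree_{\rmL(i)}$ and $\sigma\untree_{\rmR(i)}$, and claim these satisfy the identity. The only difference is that the paper asserts the verification without detail, whereas you carry out the unwinding of \cref{defgenhom} explicitly (the splitting over $i_0 = \iota\lP\empseq\rP$ and the factoring of the remaining sum), which is a faithful elaboration of what the paper leaves implicit.
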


\begin{proof}
  Suppose that $ f $ is generated by $ \sigma $.
  Let $ \rmL (1), \ocdots, \rmL (n), \rmR (1), \ocdots, \rmR (n) $ be all the children of the root of $ \sigma $.
  For each $ 1 \leq i \leq n $, let $ g _i $ be the homomorphism generated by $ \sigma \untree _{\rmL (i)} $, and $ h _i $ that generated by $ \sigma \untree _{\rmR (i)} $.
  Then $ (g _i) _{1 \leq i \leq n} $ and $ (h _i) _{1 \leq i \leq n} $ satisfy the desired property.
\end{proof}

\begin{lemma} \label{splituniq}
  For a module $ V $ and a representative homomorphism $ f \cl \treet \bbK \to \treet (V \oplus \bbK) $, suppose that we have finite families $ (g _i) _{i \in I}, (h _i) _{i \in I}, (g' _j) _{j \in J}, (h' _j) _{j \in J} $ of representative homomorphisms such that, for any grading trees $ t $ and $ u $, we have
  \begin{gather*}
    f (t \tjoin u) = \sum _{i \in I} (g _i (t) \ajoin h _i (u)) = \sum _{j \in J} (g' _j (t) \ajoin h' _j (u)).
  \end{gather*}
  Then we have
  \begin{gather*}
    \sum _{i \in I} (g _i \otimes h _i) = \sum _{j \in J} (g' _j \otimes h' _j)
  \end{gather*}
  as elements of $ \rep{V} \otimes \rep{V} $.
\end{lemma}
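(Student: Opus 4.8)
The plan is to reduce the statement to a vanishing claim and then use \cref{inj}. By bilinearity it suffices to prove the following special case: if $(p _k) _{k \in K}$ and $(q _k) _{k \in K}$ are finite families of representative homomorphisms with $\sum _{k \in K} p _k (t) \ajoin q _k (u) = 0$ for all grading trees $t$ and $u$, then $\sum _{k \in K} p _k \otimes q _k = 0$ in $\rep{V} \otimes \rep{V}$. Indeed, the general statement follows by taking $K \coloneqq I \sqcup J$ and $(p _k, q _k)$ equal to $(g _i, h _i)$ when $k = i \in I$ and to $(g' _j, -h' _j)$ when $k = j \in J$; note that $-h' _j$ is again representative because $\rep{V}$ is a module by \cref{submod}. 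The next step is to apply the injective homomorphism $\varPhi$ from \cref{inj} (precomposed with the canonical map $\rep{V} \otimes \rep{V} \to \op{Hom} (\treet \bbK, \treet (V \oplus \bbK)) ^{\otimes 2}$): tracing the definitions, $\varPhi \bigl( \sum _k p _k \otimes q _k \bigr)$ is the homomorphism sending $\trivtree$ to $0$ and $t \tjoin u$ to $\sum _k p _k (t) \ajoin q _k (u)$, which is the zero homomorphism by hypothesis. Since $\varPhi$ is injective, $\sum _k p _k \otimes q _k$ maps to $0$ in $\op{Hom} (\treet \bbK, \treet (V \oplus \bbK)) ^{\otimes 2}$.

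The remaining, and decisive, step is to upgrade this to the vanishing of $\sum _k p _k \otimes q _k$ in $\rep{V} \otimes \rep{V}$ itself; equivalently, one must know that the canonical map $\rep{V} \otimes \rep{V} \to \op{Hom} (\treet \bbK, \treet (V \oplus \bbK)) ^{\otimes 2}$ is injective. Over an arbitrary commutative ring this is not automatic, since tensoring need not preserve the inclusion of a submodule and $\op{Hom} (\treet \bbK, \treet (V \oplus \bbK))$ need not be flat (for example when $V$ is not flat); so \cref{inj} cannot simply be used as a black box. One conceivable remedy is to show that $\rep{V}$ is a pure submodule of $\op{Hom} (\treet \bbK, \treet (V \oplus \bbK))$, but I see no inexpensive argument for that.

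The route I would actually take is direct, at the level of generating trees. Write $p _k$ as the homomorphism generated by a generating tree $\pi _k$ and $q _k$ as that generated by $\kappa _k$, and form the single generating tree $\mu$ whose root is labelled $0$ and whose children, in the roles $\rmL (1), \ocdots, \rmL (\lvert K \rvert), \rmR (1), \ocdots, \rmR (\lvert K \rvert)$, are the subtrees $\pi _1, \ocdots, \pi _{\lvert K \rvert}, \kappa _1, \ocdots, \kappa _{\lvert K \rvert}$. Then $\mu$ generates exactly the homomorphism described above, hence generates the zero homomorphism. It therefore suffices to show that the element $\sum _{k} \bigl( \text{homomorphism generated by } \mu \untree _{\rmL (k)} \bigr) \otimes \bigl( \text{homomorphism generated by } \mu \untree _{\rmR (k)} \bigr)$ of $\rep{V} \otimes \rep{V}$ depends only on the homomorphism generated by $\mu$ (and so is $0$ here). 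I would establish this by determining when two generating trees generate the same homomorphism: using the explicit descriptions of $\sigma + \tau$ and $\lambda \sigma$ from the proof of \cref{submod}, one writes down a short list of ``moves'' between generating trees (reordering correlated $\rmL$/$\rmR$ subtrees, merging a repeated subtree into a scaled copy, detaching the all-$\rmL$ spine, and so on) that generate this equivalence, and then checks that each move leaves the above element of $\rep{V} \otimes \rep{V}$ fixed. This last verification, carried out inside $\rep{V} \otimes \rep{V}$ where there is no faithful ambient representation to rely on, is the part I expect to be the main obstacle.
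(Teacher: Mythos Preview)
The paper's proof is precisely your first step: it sets $p \coloneqq \sum_i g_i \otimes h_i$ and $p' \coloneqq \sum_j g'_j \otimes h'_j$, applies the map $\varPhi$ of \cref{inj}, computes $\varPhi(p) = \varPhi(p')$, and concludes ``since $\varPhi$ is injective, we have $p = p'$''. That is the entire argument; the paper does not address the point you raise in your second paragraph. You are correct that $\varPhi$ is defined on $\op{Hom}(\treet\bbK, \treet(V\oplus\bbK))^{\otimes 2}$, so its injectivity only yields equality there, and passing back to $\rep{V}\otimes\rep{V}$ requires the canonical map $\rep{V}^{\otimes 2} \to \op{Hom}(\treet\bbK, \treet(V\oplus\bbK))^{\otimes 2}$ to be injective --- which, over an arbitrary commutative ring, is not automatic. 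So you have put your finger on a genuine issue, and it is one the paper's own proof simply glosses over; you are being more careful than the paper here.

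Your proposed workaround via ``moves'' on generating trees is not in the paper, and you yourself flag the final verification as the main obstacle without carrying it out. I would be wary of this route: generating trees are infinite objects, and two of them generate the same homomorphism precisely when a countable family of tensor identities in the modules $(V\oplus\bbK)^{\otimes n}$ holds, so the equivalence relation you would need to present by moves can encode essentially arbitrary $\bbK$-linear dependencies. A finite, checkable list of generating moves seems out of reach. In summary: your initial reduction matches the paper exactly, the gap you identify is real and is shared by the paper's argument, and your proposed alternative does not yet close it.
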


\begin{proof}
  Set
  \begin{align*}
    p & \coloneqq \sum _{i \in I} (g _i \otimes h _i) \\
    p' & \coloneqq \sum _{j \in J} (g' _j \otimes h' _j),
  \end{align*}
  and let $ \varPhi $ denote the homomorphism defined in \cref{inj}.
  For any grading trees $ t $ and $ u $, we have
  \begin{align*}
    \varPhi (p) (\trivtree) & = 0 \\
    \varPhi (p) (t \tjoin u) & = \sum _{i \in I} (g _i (t) \ajoin h _i (u)) = f (t \ajoin u),
  \end{align*}
  and $ p' $ also satisfies the exactly same formulae.
  Thus we have $ \varPhi (p) = \varPhi (p') $.
  Since $ \varPhi $ is injective, we have $ p = p' $, which proves the lemma.
\end{proof}

For a representative homomorphism $ f \cl \treet \bbK \to \treet (V \oplus \bbK) $, take $ (g _i) _{i \in I} $ and $ (h _i) _{i \in I} $ as in \cref{split} and define
\begin{gather*}
  \dsm (f) \coloneqq \sum _{i \in I} (g _i \otimes h _i).
\end{gather*}
By \cref{splituniq}, this is independent of the choice of $ (g _i) _i $ and $ (h _i) _i $ and depends only on $ f $.
This concludes that the definition above yields a homomorphism $ \dsm \cl \rep{V} \to \rep{V} \otimes \rep{V} $.
In addition, letting
\begin{align*}
  \esm (f) & \coloneqq (\text{the $ \bbK $-component of $ f (\trivtree) $}) \\
  \psm (f) & \coloneqq (\text{the $ V $-component of $ f (\trivtree) $})
\end{align*}
yields $ \esm \cl \rep{V} \to \bbK $ and $ \psm \cl \rep{V} \to V $.
We will show that these data give the cofree precoalgebra:

\begin{theorem} \label{cofmod}
  For a module $ V $, the precoalgebra $ (\rep{V}, \dsm, \esm, \psm) $ is cofree over $ V $.
\end{theorem}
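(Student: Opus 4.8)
The plan is to construct the lift $ \tilde \varphi $ by iterating the comultiplication of $ D $, to show that its values land in $ \rep V $ by exhibiting explicit generating trees, and to verify the remaining equations by induction on grading trees.

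First I would set up iterated comultiplication. For a precoalgebra $ (D, \dsm ', \esm ') $ and a grading tree $ t $, define $ \dsm ' _t \cl D \to D ^{\otimes \leaf (t)} $ recursively by $ \dsm ' _{\trivtree} \coloneqq \rmid _D $ and $ \dsm ' _{t \tjoin u} \coloneqq (\dsm ' _t \otimes \dsm ' _u) \circ \dsm ' $, using the identification $ D ^{\otimes \leaf (t \tjoin u)} = D ^{\otimes \leaf (t)} \otimes D ^{\otimes \leaf (u)} $. Write $ \theta \cl D \to V \oplus \bbK $ for the homomorphism $ d \mapsto (\varphi (d), \esm ' (d)) $. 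Given $ \varphi \cl D \to V $, for each $ d \in D $ let $ \tilde \varphi (d) \cl \treet \bbK \to \treet (V \oplus \bbK) $ be the homomorphism determined by $ \tilde \varphi (d) (t) \coloneqq \theta ^{\otimes \leaf (t)} (\dsm ' _t (d)) \in \treet ^t (V \oplus \bbK) $ for each grading tree $ t $ (recall that a tree-graded homomorphism out of $ \treet \bbK $ is determined by its values on grading trees). Since every $ \dsm ' _t $ is $ \bbK $-linear, $ d \mapsto \tilde \varphi (d) $ is a homomorphism $ D \to \op{Hom} (\treet \bbK, \treet (V \oplus \bbK)) $.

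The main obstacle is to show $ \tilde \varphi (d) \in \rep V $ for every $ d $. I would do this by building a generating tree $ \sigma _d $ for $ V $ that generates $ \tilde \varphi (d) $. Choose, for every $ d \in D $, a finite expression $ \dsm ' (d) = \sum _{i = 1} ^{n (d)} a _i (d) \otimes b _i (d) $ with $ n (d) \geq 1 $ (write $ 0 \otimes 0 $ when $ \dsm ' (d) = 0 $), and define $ \sigma _d $ corecursively: its root carries the label $ \theta (d) $, it has $ 2 n (d) $ children, and $ \sigma _d \untree _{\rmL (i)} \coloneqq \sigma _{a _i (d)} $, $ \sigma _d \untree _{\rmR (i)} \coloneqq \sigma _{b _i (d)} $. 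Letting $ f _d $ be the homomorphism generated by $ \sigma _d $, I would prove $ f _d (t) = \tilde \varphi (d) (t) $ for all $ d $ and all grading trees $ t $ by induction on $ \leaf (t) $: for $ t = \trivtree $ both sides equal $ \theta (d) $; for $ t = t _1 \tjoin t _2 $, \cref{split} applied to $ \sigma _d $ yields $ f _d (t) = \sum _i f _{a _i (d)} (t _1) \ajoin f _{b _i (d)} (t _2) $, while the definition of $ \tilde \varphi $ together with $ \dsm ' _{t _1 \tjoin t _2} = (\dsm ' _{t _1} \otimes \dsm ' _{t _2}) \circ \dsm ' $ and the canonical identification $ \treet ^{t _1 \tjoin t _2} (V \oplus \bbK) \cong \treet ^{t _1} (V \oplus \bbK) \otimes \treet ^{t _2} (V \oplus \bbK) $ yields $ \tilde \varphi (d) (t) = \sum _i \tilde \varphi (a _i (d)) (t _1) \ajoin \tilde \varphi (b _i (d)) (t _2) $; the inductive hypothesis for $ t _1, t _2 $ then finishes it. Hence $ \tilde \varphi (d) = f _d $ is representative, so $ \tilde \varphi $ is a homomorphism $ D \to \rep V $ (a submodule by \cref{submod}).

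Finally I would verify the three compatibilities and uniqueness. The $ V $- and $ \bbK $-components of $ \tilde \varphi (d) (\trivtree) = \theta (d) $ are $ \varphi (d) $ and $ \esm ' (d) $, so $ \psm \circ \tilde \varphi = \varphi $ and $ \esm \circ \tilde \varphi = \esm ' $. For $ \dsm $: writing $ \dsm ' (d) = \sum _k d _k ^{\rmL} \otimes d _k ^{\rmR} $, the computation above gives $ \tilde \varphi (d) (t \tjoin u) = \sum _k \tilde \varphi (d _k ^{\rmL}) (t) \ajoin \tilde \varphi (d _k ^{\rmR}) (u) $ for all $ t, u $, so $ (\tilde \varphi (d _k ^{\rmL})) _k $ and $ (\tilde \varphi (d _k ^{\rmR})) _k $ are families as in \cref{split}, whence $ \dsm (\tilde \varphi (d)) = \sum _k \tilde \varphi (d _k ^{\rmL}) \otimes \tilde \varphi (d _k ^{\rmR}) = (\tilde \varphi \otimes \tilde \varphi) (\dsm ' (d)) $ by \cref{splituniq}; thus $ \tilde \varphi $ is a precoalgebra morphism over $ \varphi $. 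For uniqueness, let $ \psi \cl D \to \rep V $ be any precoalgebra morphism with $ \psm \circ \psi = \varphi $; one shows $ \psi (d) (t) = \tilde \varphi (d) (t) $ for all $ d, t $ by induction on $ \leaf (t) $: for $ t = \trivtree $ the components of $ \psi (d) (\trivtree) $ are forced to be $ \psm (\psi (d)) = \varphi (d) $ and $ \esm (\psi (d)) = \esm ' (d) $; for $ t = t _1 \tjoin t _2 $, use $ \dsm (\psi (d)) = (\psi \otimes \psi) (\dsm ' (d)) = \sum _k \psi (d _k ^{\rmL}) \otimes \psi (d _k ^{\rmR}) $, apply the injection $ \varPhi $ of \cref{inj} and note that $ \varPhi (\dsm (g)) $ agrees with $ g $ on every grading tree other than $ \trivtree $ (the computation in the proof of \cref{splituniq}) to get $ \psi (d) (t) = \sum _k \psi (d _k ^{\rmL}) (t _1) \ajoin \psi (d _k ^{\rmR}) (t _2) $, and conclude by the inductive hypothesis and the formula for $ \tilde \varphi (d) (t) $ above. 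So $ \psi = \tilde \varphi $. The only genuinely delicate steps are the corecursive construction of $ \sigma _d $ — which requires choosing a finite decomposition of $ \dsm ' $ at every element of $ D $ — and keeping the canonical identifications $ \treet ^{t \tjoin u} (V \oplus \bbK) \cong \treet ^t (V \oplus \bbK) \otimes \treet ^u (V \oplus \bbK) $ straight; the rest is a routine induction on grading trees.
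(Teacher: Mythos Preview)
Your proof is correct and follows essentially the same route as the paper: you build $\tilde\varphi(d)$ from iterated comultiplication and exhibit the very same generating tree (your $\sigma_d$ is the paper's $\sigma(d)$, whose label at position $r$ is $\varphi(d_r)+\esm'(d_r)$). The only differences are presentational: you package the iterated comultiplication as $\dsm'_t$ and give a closed-form $\tilde\varphi(d)(t)=\theta^{\otimes\leaf(t)}(\dsm'_t(d))$, whereas the paper writes the same thing via the recursive formula $\tilde\varphi(d)(t\tjoin u)=\sum_i \tilde\varphi(d_{\rmL(i)})(t)\ajoin\tilde\varphi(d_{\rmR(i)})(u)$; and you spell out the verification that $\tilde\varphi$ is a precoalgebra morphism via \cref{splituniq}, which the paper leaves implicit.
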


Before the proof, we introduce a new notation.
For a precoalgebra $ (D, \dsm', \esm') $ and an element $ d \in D $, since $ \dsm' (d) $ is an element of $ D \otimes D $, it can be written as a finite sum of the tensor products of two elements of $ D $.
Write
\begin{gather*}
  \dsm' (d) \eqqcolon \sum _i (d _{\rmL (i)} \otimes d _{\rmR (i)})
\end{gather*}
for that sum, where $ i $ runs through some range of positive integers.
Note that such representation of $ \dsm' (d) $ is not unique, so we fix one such representation for each $ d $.
Moreover, for each $ i $, similarly write
\begin{align*}
  \dsm' (d _{\rmL (i)}) & \eqqcolon \sum _j (d _{\rmL (i) \rmL (j)} \otimes d _{\rmL (i) \rmR (j)}) \\
  \dsm' (d _{\rmR (i)}) & \eqqcolon \sum _j (d _{\rmR (i) \rmL (j)} \otimes d _{\rmR (i) \rmR (j)}),
\end{align*}
and proceed recursively to define $ d _r $ where $ r $ is a position sequence.

Using this notation, for another precoalgebra $ (C, \dsm, \esm) $ and a homomorphism $ \varphi \cl D \to C $, the commutativity of the diagram
\begin{gather*}
  \begin{cd}{}
    D \ar[r, "\varphi"] \ar[d, "\dsm'"'] \& C \ar[d, "\dsm"] \\
    D \otimes D \ar[r, "\varphi \otimes \varphi"'] \& C \otimes C
  \end{cd} 
\end{gather*}
is rephrased as
\begin{gather*}
  \sum _i (\varphi (d) _{\rmL (i)} \otimes \varphi (d) _{\rmR (i)}) = \sum _i (\varphi (d _{\rmL (i)}) \otimes \varphi (d _{\rmR (i)})).
\end{gather*}
Hence in particular, if $ \varphi $ is a precoalgebra morphism, then $ \varphi $ satisfies this formula.

Now we return to the proof of \cref{cofmod}.

\begin{proof}
  Take a precoalgebra $ (D, \dsm', \esm') $ and a homomorphism $ \varphi \cl D \to V $.
  Suppose that there exists a precoalgebra morphism $ \tilde{\varphi} \cl D \to \rep{V} $ such that
  \begin{gather*}
    \begin{cd}{}
      D \ar[d, "\tilde{\varphi}"'] \ar[dr, "\varphi"] \& \\
      \rep{V} \ar[r, "\psm"'] \& V 
    \end{cd}
  \end{gather*}
  commutes.
  We will first prove the uniqueness of such $ \tilde{\varphi} $  and then prove its existence.

  For $ d \in D $, by the definition of $ \esm $ and $ \psm $, we have
  \begin{align*}
    \tilde{\varphi} (d) (\trivtree) & = \psm (\tilde{\varphi} (d)) + \esm (\tilde{\varphi} (d)) \\
    & = \varphi (d) + \esm' (d).
  \end{align*}
  Moreover, for any grading trees $ t $ and $ u $, by the definition of $ \dsm $, we have
  \begin{align*}
    \tilde{\varphi} (d) (t \tjoin u) & = \sum _i (\tilde{\varphi} (d) _{\rmL (i)} (t) \ajoin \tilde{\varphi} (d) _{\rmR (i)} (u)) \\
    & = \sum _i (\tilde{\varphi} (d _{\rmL (i)}) (t) \ajoin \tilde{\varphi} (d _{\rmR (i)}) (u)).
  \end{align*}
  For all elements $ d \in D $ and grading trees $ t $, these two formulae inductively determine $ \tilde{\varphi} (d) (t) $.
  This implies that such $ \tilde{\varphi} $ is unique if it exists.

  For any $ d \in D $, these formulae indeed define a homomorphism $ \tilde{\varphi} (d) \cl \treet \bbK \to \treet (V \oplus \bbK) $, because such a homomorpism can be defined by setting the image of each grading tree.
  Thus it remains to show that $ \tilde{\varphi} (d) $ is representative.
  
  Define a generating tree $ \sigma (d) $ as follows.
  First let $ \sigma (d) \lP \empseq \rP \coloneqq \esm' (d) + \varphi (d) $.
  For each position sequence $ r $, let $ \sigma (d) \lP r \rP \coloneqq \esm' (d _r) + \varphi (d _r) $ if $ d _r $ is defined.
  Then $ \sigma (d) $ generates $ \tilde{\varphi} (d) $, which implies that $ \tilde{\varphi} (d) $ is representative. 
\end{proof}

\subsection{Cofree coalgebras}

In this subsection, we will present cofree coalgebras by constructing the largest admissible subprecoalgebras of cofree coalgebras.

Fix a module $ V $ and a generating tree $ \sigma $.
Take a grading tree $ t $ with $ n \coloneqq \leaf (t) $, and let $ p _1, \ocdots, p _n $ be the leaves of $ t $.
As in the previous subsection, for each leaf $ p _k \ssp (1 \leq k \leq n) $, let $ p _k \eqqcolon \xi _{k, 1} \cdots \xi _{k, m _k} $ denote the representation as a finite sequence of $ \rmL $ or $ \rmR $, and then set
\begin{gather*}
  r _k \coloneqq \xi _{k, 1} (\iota \lP \empseq \rP) \, \xi _{k, 2} (\iota \lP \xi _{k, 1} \rP) \cdots \xi _{k, m _k} (\iota \lP \xi _{k, 1} \cdots \xi _{k, m _k - 1} \rP),
\end{gather*}
to obtain a position sequence, where the variable of the form $ \iota \lP q \rP $ ranges over nonnegative integers.
In addition to it, take $ \alpha _k $ which is either the projection $ \prV \cl V \oplus \bbK \to V $ or $ \prK \cl V \oplus \bbK \to \bbK $.
Then we have choosen the $ n $-tuple $ \vec{\alpha} \coloneqq (\alpha _1, \ocdots, \alpha _n) $, which we call a \textem{projection tuple}.
Let $ a \coloneqq \pdeg (\vec{\alpha}) $ denote the number of $ \prV $'s among them.
Now define
\begin{gather*}
  \sigma \lP t \mP \vec{\alpha} \rP \coloneqq \sum _\iota (\alpha _1 (\sigma \lP r _1 \rP) \otimes \cdots \otimes \alpha _n (\sigma \lP r _n \rP)).
\end{gather*}
This is an element of the tensor product of $ a $ copies of $ V $ and $ n - a $ copies of $ \bbK $.
Since such an element can be canonically identified with an element of $ V ^{\otimes a} $, we regard $ \sigma \lP t \mP \vec{\alpha} \rP $ as an element of $ V ^{\otimes a} $.

\begin{definition}
  Fix a module $ V $.
  A generating tree $ \sigma $ is said to be \textem{weakly normal} when, for any grading trees $ t $ and $ u $,
  and for any projection tuples $ \vec{\alpha} \coloneqq (\alpha _1, \ocdots, \alpha _{\leaf (t)}) $ and $ \vec{\beta} \coloneqq (\beta _1, \ocdots, \beta _{\leaf (u)}) $ which satisfy $ \pdeg (\vec{\alpha}) = \pdeg (\vec{\beta}) $,
  we have $ \sigma \lP t \mP \vec{\alpha} \rP = \sigma \lP u \mP \vec{\beta} \rP $.
  
  Moreover, a generating tree $ \sigma $ is said to be \textem{normal} when, for any position sequence $ s $, the generating tree $ \sigma \untree _s $ is weakly normal.
\end{definition}

\begin{definition}
  For a module $ V $, a homomorphism $ f \cl \treet \bbK \to \treet (V \oplus \bbK) $ is said to be \textem{normal} when $ f $ is generated by a normal generating tree.
  Let $ \nor{V} $ denote the set of all normal homomorphisms.
\end{definition}

Here recall that, for generating trees $ \sigma, \tau $ and a scalar $ \lambda $, we defined $ \sigma + \tau $ and $ \lambda \sigma $ in the proof of \cref{submod}.

\begin{lemma} \label{projsum}
  Fix a module $ V $, and take generating trees $ \sigma $ and $ \tau $.
  For a grading tree $ t $ with $ n \coloneqq \leaf (t) $, a projection tuple $ \vec{\alpha} \coloneqq (\alpha _1, \ocdots, \alpha _n) $ and a position sequence $ s $, we have
  \begin{gather*}
    (\sigma + \tau) \untree _s \lP t \mP \vec{\alpha} \rP =
    \begin{cases}
      \sigma \lP t \mP \vec{\alpha} \rP + \tau \lP t \mP \vec{\alpha} \rP & (s = \empseq) \\
      \sigma \untree _{\xi (i) s'} \lP t \mP \vec{\alpha} \rP & (s = \xi (i) s',\ i \leq m) \\
      \tau \untree _{\xi (i - m) s'} \lP t \mP \vec{\alpha} \rP & (s = \xi (i) s',\ i > m),
    \end{cases}
  \end{gather*}
  where $ m $ is half the number of the children of the root of $ \sigma $.
\end{lemma}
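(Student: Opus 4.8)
The plan is to treat the three cases of the statement separately. The cases $ s = \xi(i) s' $ are essentially immediate from the recipe for $ \sigma + \tau $ given in the proof of \cref{submod}, whereas the case $ s = \empseq $ carries the actual content. Write $ 2m $ and $ 2m' $ for the numbers of children of the roots of $ \sigma $ and of $ \tau $. For the cases $ s = \xi(i) s' $ I would first note that the construction in \cref{submod} makes $ \sigma + \tau $ the tree whose root carries, on its left, the subtrees $ \sigma \untree _{\rmL(1)}, \ocdots, \sigma \untree _{\rmL(m)}, \tau \untree _{\rmL(1)}, \ocdots, \tau \untree _{\rmL(m')} $ and, on its right, $ \sigma \untree _{\rmR(1)}, \ocdots, \sigma \untree _{\rmR(m)}, \tau \untree _{\rmR(1)}, \ocdots, \tau \untree _{\rmR(m')} $; equivalently $ (\sigma + \tau) \untree _{\xi(i)} = \sigma \untree _{\xi(i)} $ for $ i \leq m $ and $ (\sigma + \tau) \untree _{\xi(i)} = \tau \untree _{\xi(i - m)} $ for $ m < i \leq m + m' $, which is just a rereading of the defining equalities $ (\sigma + \tau) \lP \xi(i) r' \rP = \sigma \lP \xi(i) r' \rP $ and $ (\sigma + \tau) \lP \eta(j + m) s' \rP = \tau \lP \eta(j) s' \rP $. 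Hence $ (\sigma + \tau) \untree _{\xi(i) s'} = \bigl( (\sigma + \tau) \untree _{\xi(i)} \bigr) \untree _{s'} $ equals $ \sigma \untree _{\xi(i) s'} $ or $ \tau \untree _{\xi(i - m) s'} $ as an identity of generating trees (we may assume $ \xi(i) s' $ really is a position sequence of $ \sigma + \tau $, which in particular forces $ i \leq m + m' $ in the second case), and therefore $ (\sigma + \tau) \untree _{s} \lP t \mP \vec{\alpha} \rP $ equals $ \sigma \untree _{\xi(i) s'} \lP t \mP \vec{\alpha} \rP $ or $ \tau \untree _{\xi(i - m) s'} \lP t \mP \vec{\alpha} \rP $ accordingly, with nothing left to check.

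It remains to treat $ s = \empseq $. If $ t = \trivtree $ then $ \leaf(t) = 1 $ and $ r _1 = \empseq $, so $ (\sigma + \tau) \lP \trivtree \mP \vec{\alpha} \rP = \alpha _1 \bigl( (\sigma + \tau) \lP \empseq \rP \bigr) = \alpha _1 \bigl( \sigma \lP \empseq \rP + \tau \lP \empseq \rP \bigr) = \alpha _1 (\sigma \lP \empseq \rP) + \alpha _1 (\tau \lP \empseq \rP) = \sigma \lP \trivtree \mP \vec{\alpha} \rP + \tau \lP \trivtree \mP \vec{\alpha} \rP $, using linearity of $ \alpha _1 $. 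Now suppose $ t \neq \trivtree $; then every leaf $ p _k $ is a nonempty position sequence, so each $ r _k $ begins with the symbol $ \xi _{k, 1} (\iota \lP \empseq \rP) $, the variable $ \iota \lP \empseq \rP $ being shared by all of $ r _1, \ocdots, r _n $. I would split the finite sum defining $ (\sigma + \tau) \lP t \mP \vec{\alpha} \rP $ according to the value $ v $ of this one shared variable. When $ 1 \leq v \leq m $, we have $ (\sigma + \tau) \untree _{\xi _{k, 1} (v)} = \sigma \untree _{\xi _{k, 1} (v)} $ for every $ k $, so for such $ v $ each $ r _k $ specifies a node of $ \sigma + \tau $ exactly when it specifies one of $ \sigma $, and then $ (\sigma + \tau) \lP r _k \rP = \sigma \lP r _k \rP $; since every term of $ \sigma \lP t \mP \vec{\alpha} \rP $ already has $ \iota \lP \empseq \rP \leq m $, this part of the sum is exactly $ \sigma \lP t \mP \vec{\alpha} \rP $. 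When $ m < v \leq m + m' $, the substitution $ v \mapsto v - m $ is a bijection onto $ \{ 1, \ocdots, m' \} $ under which $ (\sigma + \tau) \untree _{\xi _{k, 1} (v)} = \tau \untree _{\xi _{k, 1} (v - m)} $, and the corresponding part of the sum is exactly $ \tau \lP t \mP \vec{\alpha} \rP $. For every other value of $ v $ the symbol $ \xi _{k, 1} (v) $ is not a child of the root of $ \sigma + \tau $, so no assignment $ \iota $ with that value of $ \iota \lP \empseq \rP $ contributes. Adding the pieces yields $ (\sigma + \tau) \lP t \mP \vec{\alpha} \rP = \sigma \lP t \mP \vec{\alpha} \rP + \tau \lP t \mP \vec{\alpha} \rP $.

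I expect the only delicate point to be exactly this last splitting: one has to observe that the first-level variable $ \iota \lP \empseq \rP $ sits at the head of every $ r _k $ simultaneously, so that partitioning the summation range by its value is legitimate and the reindexing $ v \mapsto v - m $ applies uniformly to all $ k $ at once. The trivial tree $ t = \trivtree $ genuinely needs its own one-line treatment, because then $ r _1 $ is empty and there is no leading variable to split on. Everything else is routine bookkeeping with the explicit description of $ \sigma + \tau $ and linearity of the projections.
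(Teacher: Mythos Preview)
Your proposal is correct and follows essentially the same approach as the paper: the cases $s \neq \empseq$ are dismissed as immediate from the definition of $\sigma + \tau$, and for $s = \empseq$ you split into $t = \trivtree$ (handled by linearity of $\alpha_1$) and $t \neq \trivtree$ (handled by partitioning the sum on the value of the shared leading variable $\iota\lP\empseq\rP$). Your write-up is in fact more explicit than the paper's about why the splitting on $\iota\lP\empseq\rP$ is legitimate, but the underlying argument is identical.
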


\begin{proof}
  First consider the case $ s = \empseq $.
  Write $ \rho \coloneqq (\sigma + \tau) \untree _s = \sigma + \tau $ for simplicity.
  If $ t = \trivtree $, we have
  \begin{align*}
    \rho \lP t \mP \vec{\alpha} \rP & = \alpha _1 (\rho \lP \empseq \rP) \\
    & = \alpha _1 (\sigma \lP \empseq \rP + \tau \lP \empseq \rP) \\
    & = \sigma \lP t \mP \vec{\alpha} \rP + \tau \lP t \mP \vec{\alpha} \rP.
  \end{align*}
  Otherwise, for each $ 1 \leq k \leq n $, let
  \begin{align*}
    r _k & \coloneqq \xi _{k, 1} (\iota \lP \empseq \rP) \, \xi _{k, 2} (\iota \lP \xi _{k, 1} \rP) \cdots \xi _{k, m _k} (\iota \lP \xi _{k, 1} \cdots \xi _{k, m _k - 1} \rP) \\
    r' _k & \coloneqq \xi _{k, 2} (\iota \lP \xi _{k, 1} \rP) \cdots \xi _{k, m _k} (\iota \lP \xi _{k, 1} \cdots \xi _{k, m _k - 1} \rP),
  \end{align*}
  where $ \xi _{k, l} $ is as above.
  Now we have
  \begin{gather*}
    \rho \lP t \mP \vec{\alpha} \rP \coloneqq \sum _\iota (\alpha _1 (\rho \lP \xi _{1, 1} (\iota \lP \empseq \rP) r' _1 \rP) \otimes \cdots \otimes \alpha _n (\rho \lP \xi _{n, 1} (\iota \lP \empseq \rP) r' _n \rP)).
  \end{gather*}
  By the definition of $ \sigma + \tau $, we have
  \begin{gather*}
    \rho \lP \xi _{k, 1} (\iota \lP \empseq \rP) r' _k \rP =
    \begin{cases}
      \sigma \lP \xi _{k, 1} (\iota \lP \empseq \rP) r' _k \rP & (\iota \lP \empseq \rP \leq m) \\
      \tau \lP \xi _{k, 1} (\iota \lP \empseq \rP) r' _k \rP & (\iota \lP \empseq \rP > m).
    \end{cases}
  \end{gather*}
  Thus the sum above is decomposed as
  \begin{align*}
    \rho \lP t \mP \vec{\alpha} \rP = {} & \sum _\iota (\alpha _1 (\sigma \lP \xi _{1, 1} (\iota \lP \empseq \rP) r' _1 \rP) \otimes \cdots \otimes \alpha _n (\sigma \lP \xi _{n, 1} (\iota \lP \empseq \rP) r' _n \rP)) \\
    & {} + \sum _\iota (\alpha _1 (\tau \lP \xi _{1, 1} (\iota \lP \empseq \rP) r' _1 \rP) \otimes \cdots \otimes \alpha _n (\tau \lP \xi _{n, 1} (\iota \lP \empseq \rP) r' _n \rP)),
  \end{align*}
  which implies $ \rho \lP t \mP \vec{\alpha} \rP = \sigma \lP t \mP \vec{\alpha} \rP + \tau \lP t \mP \vec{\alpha} \rP $.

  The statement in the case $ s \neq \empseq $ is obvious by the definition of $ \sigma + \tau $.
\end{proof}

\begin{lemma} \label{projmul}
  Fix a module $ V $, and take a generating tree $ \sigma $ and a scalar $ \lambda $.
  For a grading tree $ t $ with $ n \coloneqq \leaf (t) $, a projection tuple $ \vec{\alpha} \coloneqq (\alpha _1, \ocdots, \alpha _n) $ and a position sequence $ s $, we have
  \begin{gather*}
    (\lambda \sigma) \untree _s \lP t \mP \vec{\alpha} \rP =
    \begin{cases}
      \lambda \, \sigma \untree _s \lP t \mP \vec{\alpha} \rP & (\text{$ s $ is of the form $ \rmL (i _1) \cdots \rmL (i _a) $, $ a \geq 0 $}) \\
      \sigma \untree _s \lP t \mP \vec{\alpha} \rP & (\text{otherwise}).
    \end{cases}
  \end{gather*}
\end{lemma}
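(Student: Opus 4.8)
The plan is to follow the same pattern as the proof of \cref{projsum}, splitting on the shape of the position sequence $ s $, but the argument hinges on one combinatorial observation about grading trees. Namely: if $ t $ is a grading tree with leaves $ p _1, \ocdots, p _n $ listed from left to right, then the leftmost leaf $ p _1 $ is the \emph{unique} leaf of $ t $ of the form $ \rmL \cdots \rmL $ (the empty sequence when $ t = \trivtree $). Indeed, any node that is all-$ \rmL $ lies on the leftmost root-to-leaf path of $ t $, and that path terminates precisely at $ p _1 $; a proper prefix of $ p _1 $ is an inner node, and there is nothing strictly below $ p _1 $. Consequently, among the position sequences $ r _1, \ocdots, r _n $ built in the definition of $ \sigma \lP t \mP \vec{\alpha} \rP $, the sequence $ r _1 $ is of the form $ \rmL (i _1) \cdots \rmL (i _a) $, while each of $ r _2, \ocdots, r _n $ contains at least one symbol of the form $ \rmR (i) $.

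From this I would extract an auxiliary claim: for \emph{any} generating tree $ \rho $, any grading tree $ t $ and any projection tuple $ \vec{\alpha} $, one has $ (\lambda \rho) \lP t \mP \vec{\alpha} \rP = \lambda \, \rho \lP t \mP \vec{\alpha} \rP $. To prove it, expand both sides by the definition. By the definition of $ \lambda \rho $ together with the observation above, the only tensor factor affected by the rescaling is the first one, where $ \alpha _1 ((\lambda \rho) \lP r _1 \rP) = \alpha _1 (\lambda \, \rho \lP r _1 \rP) = \lambda \, \alpha _1 (\rho \lP r _1 \rP) $ by linearity of $ \alpha _1 $, whereas $ \alpha _k ((\lambda \rho) \lP r _k \rP) = \alpha _k (\rho \lP r _k \rP) $ for $ k \geq 2 $. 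Since rescaling labels changes neither the nodes of the tree nor the naming of children, the index set over which $ \iota $ runs is the same for $ \lambda \rho $ and for $ \rho $; pulling the scalar $ \lambda $ out of the tensor product and then out of the (finite) sum $ \sum _\iota $ gives the claim.

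Finally I would treat the two cases of the statement. If $ s $ is of the form $ \rmL (i _1) \cdots \rmL (i _a) $ with $ a \geq 0 $ (the case $ a = 0 $ being $ s = \empseq $), then for every position sequence $ r $ the concatenation $ s r $ is all-$ \rmL $ if and only if $ r $ is; unwinding the definitions of $ \untree _s $ and of scalar multiplication of generating trees then shows $ (\lambda \sigma) \untree _s = \lambda (\sigma \untree _s) $, and applying the auxiliary claim with $ \rho = \sigma \untree _s $ yields the first case. If instead $ s $ contains at least one symbol of the form $ \rmR (i) $, then $ s r $ is never all-$ \rmL $, so $ (\lambda \sigma) \untree _s \lP r \rP = \sigma \lP s r \rP = \sigma \untree _s \lP r \rP $ for every $ r $, i.e. $ (\lambda \sigma) \untree _s = \sigma \untree _s $, which is the second case. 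I do not expect a real obstacle; the only delicate point is the bookkeeping of which position sequences are all-$ \rmL $ — in particular the fact that exactly one leaf of $ t $, the leftmost one, produces an all-$ \rmL $ sequence $ r _k $ — together with the (routine) verification that scalar multiplication leaves the combinatorial structure of a generating tree, hence the range of $ \iota $, untouched.
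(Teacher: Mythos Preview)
Your argument is correct and is essentially the paper's own proof: both hinge on the observation that among the position sequences $r_1,\ldots,r_n$ attached to the leaves of $t$, only $r_1$ (and hence, when $s$ is all-$\rmL$, only $sr_1$) is of the form $\rmL(i_1)\cdots\rmL(i_b)$, so that exactly one tensor factor picks up the scalar $\lambda$. The only difference is packaging: you factor through the subtree identities $(\lambda\sigma)\untree_s = \lambda(\sigma\untree_s)$ or $\sigma\untree_s$ and an auxiliary claim at $s=\empseq$, whereas the paper expands $(\lambda\sigma)\untree_s\lP t\mP\vec{\alpha}\rP$ directly via the concatenations $sr_k$ and applies the definition of $\lambda\sigma$ in one step.
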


\begin{proof}
  Write $ \rho \coloneqq \lambda \sigma $.
  For each $ 1 \leq k \leq n $, take $ r _k $ as above.
  Now we have
  \begin{align*}
    \rho \untree _s \lP t \mP \vec{\alpha} \rP & = \sum _\iota (\alpha _1 (\rho \untree _s \lP r _1 \rP) \otimes \cdots \otimes \alpha _n (\rho \untree _s \lP r _n \rP)) \\
    & = \sum _\iota (\alpha _1 (\rho \lP s r _1 \rP) \otimes \cdots \otimes \alpha _n (\rho \lP s r _n \rP)).
  \end{align*}
  If $ s $ is of the form $ \rmL (i _1) \cdots \rmL (i _a) $, only $ sr _1 $ among $ sr _1, \ocdots, sr _n $ is of the form $ \rmL (i _1) \cdots \rmL (i _b) $.
  Thus by the definition of $ \lambda \sigma $, we have
  \begin{align*}
    \rho \untree _s \lP t \mP \vec{\alpha} \rP & = \sum _\iota (\alpha _1 (\lambda \sigma \lP s r _1 \rP) \otimes \alpha _2 (\sigma \lP s r _2 \rP) \cdots \otimes \alpha _n (\sigma \lP s r _n \rP)) \\
    & = \lambda \sum _\iota (\alpha _1 (\sigma \untree _s \lP r _1 \rP) \otimes \alpha _2 (\sigma \untree _s \lP r _2 \rP) \otimes \cdots \otimes \alpha _n (\sigma \untree _s \lP r _n \rP)) \\
    & = \lambda \, \sigma \untree _s \lP t \mP \vec{\alpha} \rP.
  \end{align*}
  If $ s $ is not of the form $ \rmL (i _1) \cdots \rmL (i _a) $, none of $ sr _1, \ocdots, sr _n $ is of the form $ \rmL (i _1) \cdots \rmL (i _b) $.
  Thus by the similar calculation, we have $ \rho \untree _s \lP t \mP \vec{\alpha} \rP = \sigma \untree _s \lP t \mP \vec{\alpha} \rP $.
\end{proof}

Now to prove the cofreeness of $ \nor{V} $, we generalise the notion of projection tuple and show one property of weakly normal generating trees.
Consider an $ n $-tuple $ \vec{\alpha} \coloneqq (\alpha _1, \ocdots, \alpha _n) $ such that now each $ \alpha _k $ is either $ \prV $ or $ \prK $ or $\rmid _{V \oplus \bbK} $.
Let $ \pcan (\vec{\alpha}) $ denote the tuple obtained by removing $ \prK $'s from $ (\alpha _1, \ocdots, \alpha _n) $.
For a generating tree $ \sigma $ and a grading tree $ t $, we similarly define
\begin{gather*}
  \sigma \lP t \mP \vec{\alpha} \rP \coloneqq \sum _\iota (\alpha _1 (\sigma \lP r _1 \rP) \otimes \cdots \otimes \alpha _n (\sigma \lP r _n \rP)),
\end{gather*}
where $ r _1, \ocdots, r _n $ is the same as before.
Writing $ a $ and $ a' $ for the number of $ \prV $'s and $ \rmid _{V \oplus \bbK} $'s in $ (\alpha _1, \ocdots, \alpha _n) $ respectively, we regard $ \sigma \lP t \mP \vec{\alpha} \rP $ as an element of the tensor product of $ a $ copies of $ V $ and $ a' $ copies of $ V \oplus \bbK $.
For example, in the case $ n = 4 $ and $ \vec{\alpha} = (\prV, \prK, \rmid _{V \oplus \bbK}, \prV) $, we have $ \pcan (\vec{\alpha}) = (\prV, \rmid _{V \oplus \bbK}, \prV) $ and $ \sigma \lP t \mP \vec{\alpha} \rP \in V \otimes (V \oplus \bbK) \otimes V $.

Note that, in particular when $ \vec{\alpha} = (\rmid, \ocdots, \rmid) $, we have
\begin{gather*}
  \sigma \lP t \mP \vec{\alpha} \rP \coloneqq \sum _\iota (\sigma \lP r _1 \rP \otimes \cdots \otimes \sigma \lP r _n \rP).
\end{gather*}
Thus letting $ f $ be the homomorphism generated by $ \sigma $, we have $ f (t) = \sigma \lP t \mP (\rmid, \ocdots, \rmid) \rP $ as elements of $ (V \oplus \bbK) ^{\otimes n} $.

\begin{lemma} \label{nonproj}
  Fix a module $ V $, and take a weakly normal generating tree $ \sigma $.
  For any grading trees $ t $ and $ u $, and for any generalised projection tuples $ \vec{\alpha} \coloneqq (\alpha _1, \ocdots, \alpha _{\leaf (t)}) $ and $ \vec{\beta} \coloneqq (\beta _1, \ocdots, \beta _{\leaf (u)}) $ which satisfy $ \pcan (\vec{\alpha}) = \pcan(\vec{\beta}) $,
  we have $ \sigma \lP t \mP \vec{\alpha} \rP = \sigma \lP u \mP \vec{\beta} \rP $.
\end{lemma}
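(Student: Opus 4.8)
The plan is to induct on the number of entries equal to $ \rmid _{V \oplus \bbK} $ among $ \alpha _1, \ocdots, \alpha _{\leaf (t)} $; note that this number coincides with the number of such entries among $ \beta _1, \ocdots, \beta _{\leaf (u)} $, since both equal the number of $ \rmid _{V \oplus \bbK} $-entries in the common tuple $ \pcan (\vec{\alpha}) = \pcan (\vec{\beta}) $. In the base case there are no such entries, so $ \vec{\alpha} $ and $ \vec{\beta} $ are ordinary projection tuples. Then $ \pcan $ merely deletes the $ \prK $'s, so $ \pcan (\vec{\alpha}) $ and $ \pcan (\vec{\beta}) $ consist only of $ \prV $'s and have lengths $ \pdeg (\vec{\alpha}) $ and $ \pdeg (\vec{\beta}) $ respectively; equality of the tuples forces $ \pdeg (\vec{\alpha}) = \pdeg (\vec{\beta}) $, and the conclusion is exactly the weak normality of $ \sigma $.

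For the inductive step, suppose $ \alpha _k = \rmid _{V \oplus \bbK} $ for some $ k $, and let $ p $ be the position at which this entry occurs in $ \pcan (\vec{\alpha}) $. Since $ \pcan (\vec{\beta}) = \pcan (\vec{\alpha}) $, its $ p $-th entry is also $ \rmid _{V \oplus \bbK} $; let $ k' $ be the position of the corresponding entry in $ \vec{\beta} $, so $ \beta _{k'} = \rmid _{V \oplus \bbK} $. Form $ \vec{\alpha} ' $ (resp. $ \vec{\alpha} '' $) from $ \vec{\alpha} $ by replacing $ \alpha _k $ with $ \prV $ (resp. $ \prK $), and likewise $ \vec{\beta} ' $, $ \vec{\beta} '' $ from $ \vec{\beta} $ at position $ k' $. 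A direct inspection of $ \pcan $ shows $ \pcan (\vec{\alpha} ') = \pcan (\vec{\beta} ') $ (the $ p $-th entry becomes $ \prV $ on both sides) and $ \pcan (\vec{\alpha} '') = \pcan (\vec{\beta} '') $ (the $ p $-th entry is deleted on both sides); moreover $ \vec{\alpha} ' $ and $ \vec{\alpha} '' $ have strictly fewer $ \rmid _{V \oplus \bbK} $-entries, so the inductive hypothesis applies to each of the pairs $ (\vec{\alpha} ', \vec{\beta} ') $ and $ (\vec{\alpha} '', \vec{\beta} '') $, giving $ \sigma \lP t \mP \vec{\alpha} ' \rP = \sigma \lP u \mP \vec{\beta} ' \rP $ and $ \sigma \lP t \mP \vec{\alpha} '' \rP = \sigma \lP u \mP \vec{\beta} '' \rP $.

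It remains to express $ \sigma \lP t \mP \vec{\alpha} \rP $ in terms of $ \sigma \lP t \mP \vec{\alpha} ' \rP $ and $ \sigma \lP t \mP \vec{\alpha} '' \rP $. Writing $ \iota _V \cl V \to V \oplus \bbK $ and $ \iota _\bbK \cl \bbK \to V \oplus \bbK $ for the canonical inclusions, every $ x \in V \oplus \bbK $ satisfies $ x = \iota _V (\prV (x)) + \iota _\bbK (\prK (x)) $. Applying this to the $ k $-th tensor factor $ \sigma \lP r _k \rP $ in each summand of $ \sigma \lP t \mP \vec{\alpha} \rP $ and expanding by bilinearity of $ \otimes $ yields
\begin{gather*}
  \sigma \lP t \mP \vec{\alpha} \rP = \mu ' \bigl( \sigma \lP t \mP \vec{\alpha} ' \rP \bigr) + \mu '' \bigl( \sigma \lP t \mP \vec{\alpha} '' \rP \bigr),
\end{gather*}
where $ \mu ' $ applies $ \iota _V $ in the tensor slot arising from position $ k $ and the identity in all other slots, and $ \mu '' $ first reinserts, via the canonical isomorphism, the $ \bbK $-factor that was absorbed in passing to $ \vec{\alpha} '' $ and then applies $ \iota _\bbK $ in that slot. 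The analogous identity holds for $ \sigma \lP u \mP \vec{\beta} \rP $ with corresponding maps $ \nu ' $, $ \nu '' $. Since the codomain of $ \sigma \lP t \mP \vec{\alpha} \rP $, together with the location within it of the slot coming from position $ k $, is determined solely by $ \pcan (\vec{\alpha}) $ — and likewise for $ \vec{\beta} $ — the hypothesis $ \pcan (\vec{\alpha}) = \pcan (\vec{\beta}) $ gives $ \mu ' = \nu ' $ and $ \mu '' = \nu '' $. Combining with the inductive hypothesis, $ \sigma \lP t \mP \vec{\alpha} \rP = \mu ' (\sigma \lP u \mP \vec{\beta} ' \rP) + \mu '' (\sigma \lP u \mP \vec{\beta} '' \rP) = \sigma \lP u \mP \vec{\beta} \rP $, completing the induction.

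The routine but delicate point is the last paragraph: one must carefully distinguish the tensor factors that are ``absorbed $ \bbK $'s'' from the genuine $ V \oplus \bbK $ factors, and verify that the reindexing maps $ \mu ' $, $ \mu '' $ depend only on $ \pcan (\vec{\alpha}) $. Conceptually nothing beyond the idempotent splitting $ \rmid _{V \oplus \bbK} = \iota _V \prV + \iota _\bbK \prK $ is at play; \cref{projsum,projmul} are not needed for this argument.
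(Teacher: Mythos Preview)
Your argument is correct and is essentially the paper's proof reorganised: both reduce the generalised case to the ordinary one via the splitting $\rmid_{V\oplus\bbK} = \iota_V\circ\prV + \iota_\bbK\circ\prK$, the paper expanding all $\rmid$-entries simultaneously (the ``binomial expansion'' into $\bigoplus_k \coeff{a'}{k}\,V^{\otimes(a+k)}$) while you unfold them one at a time by induction on the number of $\rmid$-entries. Your handling of the codomain bookkeeping---that the reinsertion maps $\mu'$, $\mu''$ depend only on $\pcan(\vec\alpha)$---is more explicit than the paper's, which leaves this point tacit in the phrase ``depends only on $a$ and $a'$''.
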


\begin{proof}
  Let $ a $ and $ a' $ be the number of $ \prV $'s and $ \rmid _{V \oplus \bbK} $'s in $ (\alpha _1, \ocdots, \alpha _n) $ respectively.
  By binomial expansion, $ \sigma \lP t \mP \vec{\alpha} \rP $ can be regarded as an element of
  \begin{gather*}
    \bigoplus _{0 \leq k \leq a'} \coeff{a'}{k} V ^{\otimes (a + k)},
  \end{gather*}
  where $ \coeff{a'}{k} $ is the binomial coefficient, and $ \coeff{a'}{k} V ^{\otimes (a + k)} $ is the direct sum of $ \coeff{a'}{k} $ copies of $ V ^{\otimes (a + k)} $.
  Then its $ V ^{\otimes (a + k)} $-component is of the form $ \sigma \lP t \mP \vec{\alpha} ^\circ \rP $ for some (not generalised) projection tuple $ \vec{\alpha} ^\circ $ with $ a + k = \deg (\vec{\alpha} ^\circ) $.
  Since $ \sigma $ is weakly normal, such $ \sigma \lP t \mP \vec{\alpha} ^\circ \rP $ depends only on $ a + k $.
  This implies that $ \sigma \lP t \rP $ depends only on $ a $ and $ a' $, which proves the lemma.
\end{proof}

Now we will prove that the set $ \nor{V} $ gives the cofree coalgebra over a module $ V $.
This will be done by three steps.

\begin{lemma} \label{norprec}
  For a module $ V $, the set $ (\nor{V}, \dsm, \esm) $ is a subprecoalgebra of $ (\rep{V}, \dsm, \esm) $.
\end{lemma}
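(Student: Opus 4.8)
The plan is to verify the three defining conditions of a subprecoalgebra in turn: that $\nor{V}$ is a submodule of $\rep{V}$, that the comultiplication $\dsm$ carries $\nor{V}$ into $\nor{V}\otimes\nor{V}$, and that $\esm$ restricts. The last condition is immediate, as $\esm$ is already defined on all of $\rep{V}$; so the real content is the first two.

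For the submodule claim I would reuse the generating trees $\sigma+\tau$ and $\lambda\sigma$ built in the proof of \cref{submod}, which generate $f+g$ and $\lambda f$ whenever $\sigma,\tau$ generate $f,g$. It then suffices to check that these trees are normal when $\sigma$ and $\tau$ are. For $\sigma+\tau$, fix a position sequence $s$ and apply \cref{projsum}: $(\sigma+\tau)\untree_s\lP t\mP\vec{\alpha}\rP$ equals $\sigma\lP t\mP\vec{\alpha}\rP+\tau\lP t\mP\vec{\alpha}\rP$ when $s=\empseq$, and otherwise equals $\sigma\untree_{s'}\lP t\mP\vec{\alpha}\rP$ or $\tau\untree_{s''}\lP t\mP\vec{\alpha}\rP$ for suitably shortened sequences $s',s''$. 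Since $\sigma$ and $\tau$ are normal, each of $\sigma$, $\tau$, $\sigma\untree_{s'}$, $\tau\untree_{s''}$ is weakly normal, so every term on the right depends only on $\pdeg(\vec{\alpha})$; hence so does the left, and $(\sigma+\tau)\untree_s$ is weakly normal. As $s$ was arbitrary, $\sigma+\tau$ is normal and $f+g\in\nor{V}$. The scalar case is analogous via \cref{projmul}: $(\lambda\sigma)\untree_s\lP t\mP\vec{\alpha}\rP$ is either $\lambda\,\sigma\untree_s\lP t\mP\vec{\alpha}\rP$ or $\sigma\untree_s\lP t\mP\vec{\alpha}\rP$, and in both cases weak normality of $\sigma\untree_s$ — multiplied through by $\lambda$ in the first case — gives weak normality of $(\lambda\sigma)\untree_s$, so $\lambda f\in\nor{V}$.

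For the comultiplication, let $f\in\nor{V}$ be generated by a normal tree $\sigma$ whose root has children $\rmL(1),\ocdots,\rmL(n),\rmR(1),\ocdots,\rmR(n)$. By \cref{split}, $\dsm(f)=\sum_{i=1}^{n}(g_i\otimes h_i)$ where $g_i$ is generated by $\sigma\untree_{\rmL(i)}$ and $h_i$ by $\sigma\untree_{\rmR(i)}$. It remains to observe that each $\sigma\untree_{\rmL(i)}$ and $\sigma\untree_{\rmR(i)}$ is itself normal: for any position sequence $s'$ one has $(\sigma\untree_{\rmL(i)})\untree_{s'}=\sigma\untree_{\rmL(i)s'}$, which is weakly normal because $\sigma$ is normal, and likewise for $\rmR(i)$. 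Hence every $g_i$ and $h_i$ lies in $\nor{V}$, so that $\dsm(f)$, regarded inside $\rep{V}\otimes\rep{V}$, comes from an element of $\nor{V}\otimes\nor{V}$; together with the restriction of $\esm$, this exhibits $(\nor{V},\dsm,\esm)$ as a subprecoalgebra of $(\rep{V},\dsm,\esm)$.

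All of these steps are essentially mechanical once \cref{projsum,projmul,split} are available; the one place that calls for a little care is the scalar case in the submodule part, where one must notice that the case split in \cref{projmul} is governed purely by the shape of the position sequence $s$ — whether or not it is a string of symbols $\rmL(i)$ — and not by $t$ or $\vec{\alpha}$, so that it does no harm when comparing $\sigma\untree_s\lP t\mP\vec{\alpha}\rP$ with $\sigma\untree_s\lP u\mP\vec{\beta}\rP$ for different grading trees $t,u$. I do not expect a genuine obstacle here.
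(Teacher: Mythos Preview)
Your proposal is correct and follows essentially the same route as the paper: closure of $\nor{V}$ under addition and scalar multiplication is deduced from \cref{projsum,projmul} applied to the trees $\sigma+\tau$ and $\lambda\sigma$ from \cref{submod}, and closure under $\dsm$ is obtained by noting that the generators $\sigma\untree_{\rmL(i)}$, $\sigma\untree_{\rmR(i)}$ of the components $g_i$, $h_i$ are normal by the very definition of normality. The paper states these two steps more tersely (``immediately follows by \cref{projsum,projmul}'' and ``by the definition of normality''), whereas you spell out the position-sequence case analysis in full, but the underlying argument is identical.
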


\begin{proof}
  Take normal generating trees $ \sigma, \tau $ and a scalar $ \lambda $.
  To prove that $ \nor{V} $ is closed under summation and scalar multiplication, we need to show that $ \sigma + \tau $ and $ \lambda \sigma $ are also normal.
  It immediately follows by \cref{projsum,projmul}.

  It remains to show that $ \dsm (f) \in \nor{V} \otimes \nor{V} $ holds for any element $ f \in \nor{V} $.
  Suppose $ f $ is generated by $ \sigma $.
  For each $ i $, let $ g _i $ be the homorphism generated by $ \sigma \untree _{\rmL (i)} $ and $ h _i $ by $ \sigma \untree _{\rmR (i)} $.
  By the definition of normality, it follows that $ \sigma \untree _{\rmL (i)} $ and $ \sigma \untree _{\rmR (i)} $ are normal, and thus $ g _i $ and $ h _i $ are also normal.
  Now by the definition of $ \dsm $, we have
  \begin{gather*}
    \dsm (f) = \sum _i (g _i \otimes h _i),
  \end{gather*}
  and thus $ \dsm (f) $ is in $ \nor{V} \otimes \nor{V} $.
\end{proof}

\begin{lemma} \label{noradm}
  For a module $ V $, the precoalgebra $ (\nor{V}, \dsm, \esm) $ is admissible.
\end{lemma}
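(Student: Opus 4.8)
The plan is to verify directly that $ (\nor V, \dsm, \esm) $ satisfies the coassociativity and counitality diagrams of a coalgebra, reducing each to \cref{split,nonproj}. Fix $ f \in \nor V $ generated by a normal generating tree $ \sigma $, let $ \rmL (1), \ocdots, \rmL (n), \rmR (1), \ocdots, \rmR (n) $ be the children of the root of $ \sigma $, and for a position sequence $ r $ write $ \sigma _r $ for the homomorphism generated by $ \sigma \untree _r $, which is again normal. By the definition of $ \dsm $ and \cref{split} we have $ \dsm (f) = \sum _i (\sigma _{\rmL (i)} \otimes \sigma _{\rmR (i)}) $, and applying the same description to $ \sigma _{\rmL (i)} $ and to $ \sigma _{\rmR (i)} $ gives
\begin{align*}
  (\dsm \otimes \rmid)(\dsm (f)) & = \sum _{i, j} (\sigma _{\rmL (i) \rmL (j)} \otimes \sigma _{\rmL (i) \rmR (j)} \otimes \sigma _{\rmR (i)}), \\
  (\rmid \otimes \dsm)(\dsm (f)) & = \sum _{i, j} (\sigma _{\rmL (i)} \otimes \sigma _{\rmR (i) \rmL (j)} \otimes \sigma _{\rmR (i) \rmR (j)}).
\end{align*}
Recall moreover that $ f (t) = \sigma \lP t \mP (\rmid, \ocdots, \rmid) \rP $; since $ \sigma $ is weakly normal, \cref{nonproj} shows that this depends only on $ \leaf (t) $, so in particular $ f ((t \tjoin u) \tjoin w) = f (t \tjoin (u \tjoin w)) $ for all grading trees $ t, u, w $, under the canonical identification of both $ \treet ^{(t \tjoin u) \tjoin w} (V \oplus \bbK) $ and $ \treet ^{t \tjoin (u \tjoin w)} (V \oplus \bbK) $ with $ (V \oplus \bbK) ^{\otimes (\leaf (t) + \leaf (u) + \leaf (w))} $.

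For coassociativity, mimic \cref{inj} and introduce the homomorphism
\begin{gather*}
  \varPhi ' \cl \op{Hom} (\treet \bbK, \treet (V \oplus \bbK)) ^{\otimes 3} \longrightarrow \op{Hom} (\treet \bbK, \treet (V \oplus \bbK))
\end{gather*}
sending $ g \otimes h \otimes k $ to the homomorphism that takes each grading tree of the form $ (t \tjoin u) \tjoin w $ to $ (g (t) \ajoin h (u)) \ajoin k (w) $ and every other grading tree to $ 0 $; this is well defined since a nonempty left subtree determines the decomposition $ (t, u, w) $ uniquely, and it is injective by the argument of \cref{inj} applied with the case $ n = 3 $ of \cref{teninj}. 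Arguing as in the proof of \cref{splituniq}, it then suffices to check that $ \varPhi ' $ sends the two triple tensors above to the same homomorphism. Both images vanish on grading trees that are not of the form $ (t \tjoin u) \tjoin w $, and on such a tree, expanding $ f ((t \tjoin u) \tjoin w) $ and $ f (t \tjoin (u \tjoin w)) $ by repeated use of \cref{split} (and using that $ \ajoin $ is concatenation of tensors, hence associative on the underlying modules) shows that $ \varPhi '((\dsm \otimes \rmid)(\dsm (f))) $ and $ \varPhi '((\rmid \otimes \dsm)(\dsm (f))) $ take the values $ f ((t \tjoin u) \tjoin w) $ and $ f (t \tjoin (u \tjoin w)) $ there, respectively; these agree by the previous paragraph, so $ (\dsm \otimes \rmid) \circ \dsm = (\rmid \otimes \dsm) \circ \dsm $ on $ \nor V $.

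For counitality, observe that $ \sigma _{\rmL (i)} (\trivtree) = \sigma \lP \rmL (i) \rP $, so $ \esm (\sigma _{\rmL (i)}) = \prK (\sigma \lP \rmL (i) \rP) $ and $ (\esm \otimes \rmid)(\dsm (f)) = \sum _i \prK (\sigma \lP \rmL (i) \rP) \, \sigma _{\rmR (i)} \in \nor V $. Evaluating at a grading tree $ t $ with $ \ell \coloneqq \leaf (t) $ and unravelling the outermost summation index in the definition of $ \sigma \lP \trivtree \tjoin t \mP (\prK, \rmid, \ocdots, \rmid) \rP $ (with $ \ell $ copies of $ \rmid $), one obtains $ \sum _i \prK (\sigma \lP \rmL (i) \rP) \, \sigma _{\rmR (i)} (t) = \sigma \lP \trivtree \tjoin t \mP (\prK, \rmid, \ocdots, \rmid) \rP $; since $ \pcan (\prK, \rmid, \ocdots, \rmid) = (\rmid, \ocdots, \rmid) = \pcan (\rmid, \ocdots, \rmid) $, \cref{nonproj} gives this $ = \sigma \lP t \mP (\rmid, \ocdots, \rmid) \rP = f (t) $. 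Hence $ (\esm \otimes \rmid) \circ \dsm $ is the canonical isomorphism $ \nor V \to \bbK \otimes \nor V $, and symmetrically $ (\rmid \otimes \esm) \circ \dsm $ is the canonical isomorphism $ \nor V \to \nor V \otimes \bbK $ (now using $ t \tjoin \trivtree $ and the tuple $ (\rmid, \ocdots, \rmid, \prK) $). I expect the coassociativity step to be the main obstacle: the formal expansions of $ (\dsm \otimes \rmid)(\dsm (f)) $ and $ (\rmid \otimes \dsm)(\dsm (f)) $ involve genuinely different subtrees of $ \sigma $, so the identity cannot be proved at the level of generating trees, and the essential input that makes it true is that, for a normal $ \sigma $, the value $ f (s) $ collapses to a function of $ \leaf (s) $ — the content of \cref{nonproj} — with $ \varPhi ' $ serving only to make this collapse visible after two applications of $ \dsm $.
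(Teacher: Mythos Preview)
Your argument is correct and follows essentially the same route as the paper: reduce coassociativity to the identity $\sigma \lP (t \tjoin u) \tjoin w \mP (\rmid, \ocdots, \rmid) \rP = \sigma \lP t \tjoin (u \tjoin w) \mP (\rmid, \ocdots, \rmid) \rP$ and counitality to $\sigma \lP t \mP (\rmid, \ocdots, \rmid) \rP = \sigma \lP \trivtree \tjoin t \mP (\prK, \rmid, \ocdots, \rmid) \rP = \sigma \lP t \tjoin \trivtree \mP (\rmid, \ocdots, \rmid, \prK) \rP$, both of which are instances of \cref{nonproj}. The only cosmetic difference is that the paper invokes \cref{teninj} directly (evaluating the triple tensor at $t \otimes u \otimes v$) instead of repackaging that injectivity into your auxiliary map $\varPhi'$, but the content is identical.
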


\begin{proof}
  To prove the coassociativity of $ \dsm $, we need to show that, for any normal homomorphism $ f $, we have
  \begin{gather*}
    \sum _{i, j} (f _{\rmL (i) \rmL (j)} \otimes f _{\rmL (i) \rmR (j)} \otimes f _{\rmR (i)}) = \sum _{i, j} (f _{\rmL (i)} \otimes f _{\rmR (i) \rmL (j)} \otimes f _{\rmR (i) \rmR (j)}).
  \end{gather*}
  By \cref{teninj}, it suffices to show that, for any grading trees $ t, u, v $, we have
  \begin{gather*}
    \sum _{i, j} (f _{\rmL (i) \rmL (j)} (t) \otimes f _{\rmL (i) \rmR (j)} (u) \otimes f _{\rmR (i)} (v)) = \sum _{i, j} (f _{\rmL (i)} (t) \otimes f _{\rmR (i) \rmL (j)} (u) \otimes f _{\rmR (i) \rmR (j)} (v)),
  \end{gather*}
  as elements of $ (V \oplus \bbK) ^{\otimes (\leaf (t) + \leaf (u) + \leaf (v))} $.
  Supposing that $ f $ is generated by a normal generating tree $ \sigma $, this equation is rewritten as
  \begin{gather*}
    \sigma \lP (t \tjoin u) \tjoin v \mP (\rmid, \ocdots, \rmid) \rP = \sigma \lP t \tjoin (u \tjoin v) \mP (\rmid, \ocdots, \rmid) \rP,
  \end{gather*}
  which holds by \cref{nonproj}.

  To prove the counitality of $ \esm $, we need to show that, for any normal homomorphism $ f $, we have
  \begin{gather*}
    f = \sum _i \prK (f _{\rmL (i)} (\trivtree)) f _{\rmR (i)} = \sum _i \prK (f _{\rmR (i)} (\trivtree)) f _{\rmL (i)}.
  \end{gather*}
  To this end, it suffices to show that, for any grading tree $ t $,
  \begin{gather*}
    f (t) = \sum _i \prK (f _{\rmL (i)} (\trivtree)) f _{\rmR (i)} (t) = \sum _i \prK (f _{\rmR (i)} (\trivtree)) f _{\rmL (i)} (t)
  \end{gather*}
  as elements of $ (V \oplus \bbK) ^{\otimes {\leaf (t)}} $.
  Supposing that $ f $ is generated by a normal generating tree $ \sigma $, this equation is rewritten as
  \begin{gather*}
    \sigma \langle t \mP (\rmid, \ocdots, \rmid) \rangle = \sigma \lP \trivtree \tjoin t \mP (\prK, \rmid, \ocdots, \rmid) \rP = \sigma \lP t \tjoin \trivtree \mP (\rmid, \ocdots, \rmid, \prK) \rP,
  \end{gather*}
  which again holds by \cref{nonproj}.
\end{proof}

\begin{theorem} \label{cofmodtwo}
  For a module $ V $, the coalgebra $ (\nor{V}, \dsm, \esm) $ is cofree over $ V $.
\end{theorem}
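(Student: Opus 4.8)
The plan is to identify $ \nor{V} $ with the largest admissible subprecoalgebra of the cofree precoalgebra $ \rep{V} $ (\cref{cofmod}) and then invoke \cref{barr}. By \cref{norprec,noradm} we already know that $ (\nor{V}, \dsm, \esm) $ is an admissible subprecoalgebra of $ (\rep{V}, \dsm, \esm) $, so the only thing left is to check that it contains every admissible subprecoalgebra $ E \subseteq \rep{V} $; that makes $ \nor{V} $ the largest such, and \cref{barr} then gives the theorem.

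So fix an admissible subprecoalgebra $ E $ and an element $ f \in E $; the goal is to exhibit a \emph{normal} generating tree that generates $ f $. Since $ \dsm $ sends $ E $ into $ E \otimes E $, fixing once and for all a representation $ \dsm (g) = \sum _i (g _{\rmL (i)} \otimes g _{\rmR (i)}) $ for every $ g \in E $ (as before \cref{cofmod}) makes all the iterates $ f _r $, $ r $ a position sequence, lie in $ E $, and the identity $ (f _s) _r = f _{sr} $ holds by an immediate induction on $ r $. Define a generating tree $ \sigma $ by $ \sigma \lP r \rP \coloneqq f _r (\trivtree) = \psm (f _r) + \esm (f _r) \in V \oplus \bbK $. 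Exactly as in the last paragraph of the proof of \cref{cofmod}, applied with $ D = \rep{V} $ and $ \varphi = \psm $ (so that the lift $ \tilde{\varphi} $ is the identity on $ \rep{V} $ by uniqueness), the tree $ \sigma $ generates $ f $. Because $ \sigma \untree _s $ is, as a labelled tree, the generating tree built in the same way from $ f _s \in E $, it suffices to show: for every $ g \in E $, the generating tree $ \sigma (g) $ with $ \sigma (g) \lP r \rP = g _r (\trivtree) $ is weakly normal.

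For the latter, write $ \dsm _t \cl E \to E ^{\otimes \leaf (t)} $ for the comultiplication iterated along the shape of a grading tree $ t $. The definition of the position sequences $ r _1, \ocdots, r _n $ in \cref{defgenhom} is exactly arranged so that $ \sum _\iota (g _{r _1} \otimes \cdots \otimes g _{r _n}) = \dsm _t (g) $, an element of $ E ^{\otimes n} $ independent of the chosen representations. Then $ \sigma (g) \lP t \mP \vec{\alpha} \rP $ is the image of $ \dsm _t (g) $ under the map that sends the $ k $-th tensor factor through $ \psm $ when $ \alpha _k = \prV $ and through $ \esm $ when $ \alpha _k = \prK $, followed by absorbing the resulting scalar factors via the canonical isomorphisms $ \bbK \otimes M \cong M $. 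Now admissibility enters: coassociativity of $ \dsm $ on $ E $ makes $ \dsm _t (g) $ depend only on $ n = \leaf (t) $, so denote it $ \dsm ^{(n)} (g) $; and counitality shows that contracting $ \esm $ against any single factor turns $ \dsm ^{(n)} (g) $ into $ \dsm ^{(n - 1)} (g) $, no matter which factor. Iterating the contraction, if $ a \coloneqq \pdeg (\vec{\alpha}) $ then $ \sigma (g) \lP t \mP \vec{\alpha} \rP = \psm ^{\otimes a} (\dsm ^{(a)} (g)) $, which depends on $ g $ and $ a $ alone. In particular $ \sigma (g) \lP t \mP \vec{\alpha} \rP = \sigma (g) \lP u \mP \vec{\beta} \rP $ whenever $ \pdeg (\vec{\alpha}) = \pdeg (\vec{\beta}) $, so $ \sigma (g) $ is weakly normal, as required.

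The step I expect to be most delicate is the bookkeeping identity $ \sum _\iota (g _{r _1} \otimes \cdots \otimes g _{r _n}) = \dsm _t (g) $ together with the two reductions that follow from admissibility: conceptually these are just the classical facts that in a coassociative counital coalgebra the $ n $-fold comultiplication is well defined and that $ \esm $ may be contracted against any tensor factor, but one has to line them up precisely against the position-sequence indexing of \cref{defgenhom} and carefully track the canonical isomorphisms used when absorbing the $ \esm $-contractions.
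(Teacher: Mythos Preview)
Your proposal is correct and follows essentially the same route as the paper: reduce to showing that $\nor{V}$ is the largest admissible subprecoalgebra of $\rep{V}$, then for $f$ in any admissible subprecoalgebra exhibit a normal generating tree for $f$ by identifying $\sigma\lP t\mP\vec{\alpha}\rP$ with $(\alpha_1\otimes\cdots\otimes\alpha_n)\circ\theta^{\otimes n}\circ\lM\dsm\rM^{n}(f)$ and using coassociativity/counitality to see this depends only on $\pdeg(\vec{\alpha})$. The only cosmetic difference is that the paper fixes an \emph{arbitrary} generating tree $\sigma$ for $f$ and chooses the representatives $f_r$ to be those generated by $\sigma\untree_r$ (so that $\sigma\lP r\rP=f_r(\trivtree)$ holds by construction), whereas you fix the representatives first and build the particular tree $\sigma\lP r\rP\coloneqq f_r(\trivtree)$; both lead to the same computation, and the paper packages the ``$\esm$-contraction'' step you describe into a single commuting diagram.
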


Before the proof, we recall the notation defined before.
For a subprecoalgebra $ (F, \dsm, \esm) $ of $ (\nor{V}, \dsm, \esm) $ and an element $ f \in F $, we have written
\begin{gather*}
  \delta (f) \eqqcolon \sum _i (f _{\rmL (i)} \otimes f _{\rmR (i)}).
\end{gather*}
Suppose that $ f $ is generated by $ \sigma $.
For each $ i $, let $ g _i $ be the homorphism generated by $ \sigma \untree _{\rmL (i)} $ and $ h _i $ by $ \sigma \untree _{\rmR (i)} $.
Now since we have
\begin{gather*}
  \delta (f) \eqqcolon \sum _i (g _i \otimes h _i),
\end{gather*}
we can choose $ f _{\rmL (i)} \coloneqq g _i $ and $ f _{\rmR (i)} \coloneqq h _i $.
In the next proof, we adopt this choice for $ f _{\rmL (i)} $'s and $ f _{\rmR (i)} $'s.
Then we have $ f _{\rmL (i)} (\trivtree) = \sigma \untree _{\rmL (i)} \lP \empseq \rP = \sigma \lP \rmL (i) \rP $ and similarly $ f _{\rmR (i)} (\trivtree) = \sigma \lP \rmR (i) \rP $.
Recall also that we have defined $ f _r $ for a position sequence $ r $.
Adopting the choice above, we have $ f _r (\trivtree) = \sigma \untree _r \lP \empseq \rP = \sigma \lP r \rP $.

We also introduce a new notation here.
Take an admissible subprecoalgebra $ (F, \dsm, \esm) $ of $ (\nor{V}, \dsm, \esm) $.
Consider a morphism $ F \to F ^{\otimes n} \ssp (n \geq 1) $ which is given as a composite of morphisms of the form $ \rmid ^{\otimes a} \otimes \dsm \otimes \rmid ^{\otimes b} \cl F ^{\otimes (a + b + 1)} \to F ^{\otimes (a + b + 2)} \ssp (a, b \geq 0) $.
Since $ \dsm \cl F \to F \otimes F $ is coassociative, such a morphism is independent of the way it is constructed, and depends only on $ n $.
Thus we write $ \lM \dsm \rM ^n \cl F \to F ^{\otimes n} $ for that morphism.

Now we will prove \cref{cofmodtwo}.

\begin{proof}
  By \cref{barr}, it suffices to show that $ (\nor{V}, \dsm, \esm) $ is the largest admissible subprecoalgebra of $ (\rep{V}, \dsm, \esm) $.
  To this end, for a admissible subprecoalgebra $ (F, \dsm, \esm) $ and an element $ f \in F $, we will prove that $ f $ is normal.
  Let $ \sigma $ be a generating tree which generates $ f $.

  First we will prove that $ f $ is weakly normal.
  Take a grading tree $ t $ with $ n \coloneqq \leaf (t) $ and a projection tuple $ \vec{\alpha} \coloneqq (\alpha _1, \ocdots, \alpha _n) $ with $ a \coloneqq \pdeg (\vec{\alpha}) $.
  We have
  \begin{align*}
    f (t) & = \sum _\iota (\sigma \lP r _1 \rP \otimes \sigma \lP r _2 \rP \otimes \cdots \otimes \sigma \lP r _n \rP) \\
    & = \sum _\iota (f _{r _1} (\trivtree) \otimes f _{r _2} (\trivtree) \otimes \cdots \otimes f _{r _n} (\trivtree)) \\
    & = \theta ^{\otimes n} (\lM \dsm \rM ^n (f)),
  \end{align*}
  where $ \theta \cl F \to V \oplus \bbK $ is the morphism defined by $ \theta (f) \coloneqq f (\trivtree) $.
  Thus it follows that $ \sigma \lP t \mP \vec{\alpha} \rP $ is an image of $ f $ by the morphism obtained by the composite
  \begin{gather*}
    \begin{cd}{}
      F \ar[r, "\lM \dsm \rM ^n"] \& F ^{\otimes n} \ar[r, "\theta ^{\otimes n}"] \& (V \oplus \bbK) ^{\otimes n} \ar[r, "\alpha _1 \otimes \cdots \otimes \alpha _n"] \&[1.5em] V ^{\otimes a} \otimes \bbK ^{\otimes (n - a)} \ar[r] \& V ^{\otimes a}, \tag{$ \sharp _1 $}
    \end{cd}
  \end{gather*}
  where the last morphism is the canonical isomorphism.
  We will show that this morphism is equal to the composite
  \begin{gather*}
    \begin{cd}{}
      F \ar[r, "\lM \dsm \rM ^a"] \& F ^{\otimes a} \ar[r, "\pi ^{\otimes a}"] \& V ^{\otimes a}. \tag{$ \sharp _2 $}
    \end{cd}
  \end{gather*}
  If this is done, it follows that $ \sigma \lP t \mP \vec{\alpha} \rP $ depends only on $ a $, and thus $ \sigma $ is shown to be weakly normal.

  For simplicity, we shall only illustrate the case in which $ \alpha _1, \ocdots, \alpha _a $ are $ \prV $ and $ \alpha _{a + 1}, \ocdots, \alpha _n $ are $ \prK $, but the argument in the other cases is similar.
  Consider the diagram
  \begin{gather*}
    \begin{cd}{row sep=large, column sep=large}
      F \ar[r, "\lM \dsm \rM ^n"] \ar[d, "\lM \dsm \rM ^a"'] \& F ^{\otimes n} \ar[rd, "\theta ^{\otimes n}"] \ar[d, "\rmid ^{\otimes a} \otimes \esm ^{\otimes (n - a)}"'] \& \\
      F ^{\otimes a} \ar[r] \ar[rd, "\psm ^{\otimes a}"'] \& F ^{\otimes a} \otimes \bbK ^{\otimes (n - a)} \ar[dr, pos=0.4, "\psm ^{\otimes a} \otimes \rmid ^{\otimes (n - a)}"'] \& (V \oplus \bbK) ^{\otimes n} \ar[d, "\alpha _1 \otimes \cdots \otimes \alpha _n"] \\
      \& V ^{\otimes a} \ar[r] \& V ^{\otimes a} \otimes \bbK ^{\otimes (n - a)} \makebox[0em]{,}
   \end{cd} 
  \end{gather*}
  where the unlabelled morphisms are the canonical isomorphisms.
  The upper-left square commutes by the coassociativity of $ \dsm $ and the counitality of $ \esm $, and the other two parallelograms also commute clearly.
  Hence the whole diagram commutes, which means that the two morphisms $ \sharp _1 $ and $ \sharp _2 $ are equal.

  Next we will show that $ f $ is normal.
  For any position sequence $ s $, the homomorphism $ f _s $ is that generated by $ \sigma \untree _s $.
  Since $ f _s $ is also in $ F $, the same argument as above proves that $ \sigma \untree _s $ is weakly normal.
  This concludes that $ \sigma $ is normal, and thus $ f $ is normal.
\end{proof}

\section{Comparison with the construction given by Block--Leroux}

In this section, we will compare our construction of cofree coalgebras with the other known construction.
Since cofree coalgebras are unique up to isomorphism, our cofree coalgebras are isomorphic to those given by Sweedler, Block--Leroux, Hazewinkel and Murfet \cite{hax,bgdc,hccm,mscc}.
Among them, we will explicitly present the isomorphism between our cofree coalgebras and those by Block--Leroux.
First let us recall the construction given by them.
Here we suppose that $ \bbK $ is a field.

\begin{definition}
  For a vector space $ V $, a linear map $ f \cl \bbK [X] \to TV $ is said to be \textem{representative} when $ f $ preserves degrees and there exist two finite families $ (g _i) _{i \in I} $ and $ (h _i) _{i \in I} $ of linear maps $ \bbK [X] \to TV $ such that, for any elements $ t, u \in \bbK [X] $, we have
  \begin{gather*}
    f (t \cdot u) = \sum _{i \in I} (g _i (t) \cdot h _i (u)).
  \end{gather*}
  Here $ \bbK [X] $ denotes the polynomial algebra over $ \bbK $.
  Let $ \blrep{V} $ denote the set of all representative linear maps\footnotemark.
  \footnotetext{In the original paper, $ \blrep{V} $ is denoted by $ \bbK [X] ^0 _V $.}
\end{definition}

For a linear map $ f \cl \bbK [X] \to TV $ which is representative in the sence of Block--Leroux, take $ (g _i) _{i \in I} $ and $ (h _i) _{i \in I} $ as in the definition above.
They showed that
\begin{align*}
  \dbl (f) & \coloneqq \sum _{i \in I} (g _i \otimes h _i) \\
  \ebl (f) & \coloneqq f (1) \\
  \pbl (f) & \coloneqq f (X)
\end{align*}
yield three well-defined linear maps
\begin{align*}
  \dbl \cl {} & \blrep{V} \to \blrep{V} \otimes \blrep{V} \\
  \ebl \cl {} & \blrep{V} \to \bbK \\
  \pbl \cl {} & \blrep{V} \to V,
\end{align*}
and these data form the cofree coalgebra over $ V $:

\begin{theorem}[Block--Leroux, Theorem 2 \cite{bgdc}]
  For a vector space $ V $, the data $ (\blrep{V}, \dbl, \ebl, \pbl) $ gives the cofree coalgebra over $ V $.
\end{theorem}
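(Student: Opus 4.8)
Since the statement is quoted from Block--Leroux, nothing is strictly required here; but I would prove it inside the present framework by deducing it from \cref{cofmodtwo}. The plan is to build an explicit module isomorphism $\Theta\colon\nor{V}\to\blrep{V}$ that intertwines $\psm$ with $\pbl$ and carries $\dsm,\esm$ to $\dbl,\ebl$. Cofreeness is a universal property, so such a $\Theta$ transports the couniversality of $(\nor{V},\dsm,\esm,\psm)$ from \cref{cofmodtwo} onto $(\blrep{V},\dbl,\ebl,\pbl)$; and since $\Theta$ is required to turn $\dsm,\esm$ \emph{literally} into Block--Leroux's operations, the very computation that checks this compatibility also re-proves that $\dbl,\ebl$ are well defined and that $(\blrep{V},\dbl,\ebl)$ is a coalgebra.

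For the construction, note first that a normal homomorphism $f\in\nor{V}$, generated by a normal generating tree $\sigma$, is completely pinned down by the family $v_n\coloneqq\sigma\lP t\mP\vec\alpha\rP\in V^{\otimes n}\ssp(n\ge1)$ — taken for any grading tree $t$ with $\leaf(t)\ge n$ and any projection tuple $\vec\alpha$ with $\pdeg(\vec\alpha)=n$, well defined by weak normality as in \cref{nonproj} — together with $v_0\coloneqq\esm(f)\in\bbK$; conversely $f(t)$ is recovered from $(v_n)_n$ by the binomial decomposition of $(V\oplus\bbK)^{\otimes\leaf(t)}$ used in the proof of \cref{nonproj}. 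I would set $\Theta(f)\colon\bbK[X]\to TV$ to be the degree-preserving linear map with $\Theta(f)(X^n)=v_n$. Linearity and injectivity of $\Theta$ then follow from \cref{projsum,projmul} and from the fact that $(v_n)_n$ recovers $f$. That $\Theta(f)$ is representative in the sense of Block--Leroux comes from \cref{split,norprec}: taking $g_i$ generated by $\sigma\untree_{\rmL(i)}$ and $h_i$ by $\sigma\untree_{\rmR(i)}$, one has $\dsm(f)=\sum_i g_i\otimes h_i$ and $f(t\tjoin u)=\sum_i g_i(t)\ajoin h_i(u)$, and splitting the defining sum of $\sigma\lP t\tjoin u\mP\vec\alpha\rP$ along the root variable $\iota\lP\empseq\rP$ gives $\sigma\lP t\tjoin u\mP(\vec\beta,\vec\gamma)\rP=\sum_i\sigma\untree_{\rmL(i)}\lP t\mP\vec\beta\rP\otimes\sigma\untree_{\rmR(i)}\lP u\mP\vec\gamma\rP$, that is $\Theta(f)(X^{n+m})=\sum_i\Theta(g_i)(X^n)\cdot\Theta(h_i)(X^m)$. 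This same identity shows $\Theta$ carries $\dsm$ to $\dbl$, while $\Theta(f)(1)=v_0=\esm(f)$ and $\Theta(f)(X)=v_1=\psm(f)$ give compatibility with $\ebl,\pbl$.

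The real work is surjectivity of $\Theta$. One route builds, from a representative $\phi\in\blrep{V}$ with witnesses $(g_i)_{i\in I},(h_i)_{i\in I}$, a normal generating tree realising the sequence $(\phi(X^n))_n$: label the root by $\phi(X)+\phi(1)\in V\oplus\bbK$, use the witnessing families to fill in the subtrees under $\rmL(i)$ and $\rmR(i)$, recurse along all position sequences, and check normality because at every subtree the realised sequence again obeys a Block--Leroux-type recursion. A cleaner route, which I would prefer, feeds $\pbl\colon\blrep{V}\to V$ into the couniversality of $\nor{V}$ (\cref{cofmodtwo}) to get a coalgebra morphism $\Theta'\colon\blrep{V}\to\nor{V}$ over $\psm$, observes that $\Theta'\circ\Theta=\rmid_{\nor{V}}$ by the uniqueness clause of that couniversality, and notes that $\Theta'$ is injective because a straightforward induction shows $\Theta'(\phi)(t)=0$ forces $\phi(X^n)=0$ for all $n$ (iterated comultiplication recovers $\phi(X^n)$); then $\Theta$ and $\Theta'$ are mutually inverse. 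Either way, this is the step needing care; every other check reduces to \cref{projsum,projmul,nonproj}.

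Alternatively one could bypass the isomorphism and argue directly, paralleling \cref{cofmod}: verify $\dbl$ is well defined by a \cref{teninj}-style injectivity argument — degree-preservation of the maps in $\blrep{V}$ is essential here, because the multiplication of $\bbK[X]$ lacks the injectivity exploited in \cref{inj,splituniq} — derive coassociativity of $\dbl$ and counitality of $\ebl$ from associativity and unitality of $\bbK[X]$, and obtain couniversality by sending $\varphi\colon D\to V$ to $d\mapsto\bigl[X^n\mapsto\varphi^{\otimes n}(\lM\dsm'\rM^n(d))\bigr]$ (with $1\mapsto\esm'(d)$), whose representativity, morphism property, and uniqueness mirror the corresponding steps of \cref{cofmod}. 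The isomorphism route is shorter given \cref{cofmodtwo}; the direct route is fully self-contained.
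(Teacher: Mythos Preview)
The paper does not prove this theorem at all; it is cited from Block--Leroux and then used in Section~4, where the map you call $\Theta$ appears as the paper's $\tilde\varphi$. The logical direction there is the reverse of yours: the paper assumes both $\nor{V}$ and $\blrep{V}$ are cofree (the latter by citation), deduces that the unique coalgebra morphism over $V$ between them is an isomorphism, and merely writes it down. So your construction of $\Theta$ agrees with the paper, but you are attempting something the paper never does, namely to derive Block--Leroux's theorem from \cref{cofmodtwo}.

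Your preferred Route~2 for surjectivity is circular. Invoking the couniversal property of \cref{cofmodtwo} for the map $\pbl\colon\blrep{V}\to V$ requires $(\blrep{V},\dbl,\ebl)$ to already be a coalgebra, but the compatibility computation you performed only shows that $\dbl$ is well defined on the image $\Theta(\nor{V})$: you produced one particular witness family $(\Theta(g_i),\Theta(h_i))$ for each $\Theta(f)$ and showed $\sum_i\Theta(g_i)\otimes\Theta(h_i)$ is determined by $f$, whereas an arbitrary $\phi\in\blrep{V}$ carries an arbitrary witness family whose independence from choices is exactly the point at issue. Route~1 has a related gap: to ``recurse along all position sequences'' you need each witness $g_i,h_i$ to itself be representative, which the bare definition does not assert and which, over a field, requires the standard linear-independence argument that is part of Block--Leroux's own proof. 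Either way you must first establish, independently of $\Theta$, that $\dbl$ is well defined and lands in $\blrep{V}\otimes\blrep{V}$; once that is done the coassociativity and counitality are immediate from associativity and unitality of $\bbK[X]$, after which either Route~2 or your direct couniversality argument $d\mapsto[X^n\mapsto\varphi^{\otimes n}(\{\dsm'\}^n(d))]$ goes through. In short, the ``cleaner route'' does not actually save the work you hoped to avoid.
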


For a vector space $ V $, Block and Leroux's $ \blrep{V} $ and our $ \nor{V} $ are both cofree over $ V $, which implies that $ \blrep{V} $ and $ \nor{V} $ are mutually isomorphic.
This isomorphism can be explicitly illustrated as follows.

If a generating tree $ \sigma $ is normal, then for a grading tree $ t $ and a projection tuple $ \vec{\alpha} $, the value $ \sigma \lP t \mP \vec{\alpha} \rP $ depends only on $ a \coloneqq \pdeg (\vec{\alpha}) $.
Thus we simply write $ \sigma \lP a \rP $ for that value.

Now let $ f $ denote the linear map generated by $ \sigma $.
For each integer $ n \geq 1 $, define
\newcommand{\tntree}{
  \tikz[baseline={([yshift=-.8ex]current bounding box.center)}]{
    \path (0, 0) node [point] (O) {};
    \path (-0.4, -0.3) node [point] (L) {};
    \path (0.4, -0.3) node [point] (R) {};
    \path (-0.7, -0.6) node [point] (LL) {};
    \path (-0.1, -0.6) node [point] (LR) {};
    \path (-0.7, -1.2) node [point] (X) {};
    \path (-1, -1.5) node [point] (XL) {};
    \path (-0.4, -1.5) node [point] (XR) {};
    \draw (O) -- (L); \draw (O) -- (R);
    \draw (L) -- (LL); \draw (L) -- (LR);
    \draw (X) -- (XL); \draw (X) -- (XR);
    \path (-0.7, -0.8) node {\scriptsize$ \vdots $}
  }
}
\begin{gather*}
  t _n \coloneqq \underbrace{\tntree} _{\text{$ n $ leaves}}.
\end{gather*}
Then we have
\begin{gather*}
  \sigma \lP n \rP = \sigma \lP t _n \mP (\prV, \ocdots, \prV) \rP = \prV ^{\otimes n} (f (t _n)),
\end{gather*}
where $ \prV \cl V \oplus \bbK \to V $ is the projection, and $ f (t _n) $ is regarded as an element of $ (V \oplus \bbK) ^{\otimes n} $.
This ensures that the value $ \sigma \lP n \rP $ is independent of the choice of $ \sigma $ and determined only by $ f $.
Moreover, we have
\begin{gather*}
  \sigma \lP 0 \rP = \sigma \lP \trivtree \mP (\prK) \rP = \prK (f (\trivtree)),
\end{gather*}
where $ \prK \cl V \oplus \bbK \to \bbK $ is the projection.
This means that $ \sigma \lP 0 \rP $ is also determined only by $ f $.
Hence the mapping
\begin{gather*}
  \begin{array}{r@{}r@{}c@{}l}
    \tilde{\varphi} \cl {} & \nor{V} & {} \longrightarrow {} & \blrep{V} \\
    & f & {} \longmapsto {} & \lb
    \begin{array}{@{\,}r@{}c@{}l@{\,}}
      \bbK [X] & {} \longrightarrow {} & TV \\
      X ^n & {} \longmapsto {} & \sigma \lP n \rP
    \end{array}
    \rb
  \end{array}
\end{gather*}
is well-defined and linear by \cref{projsum,projmul}.
Since $ \sigma \lP 1 \rP = \prV (f (\trivtree)) $, the diagram
\begin{gather*}
  \begin{cd}{}
    \nor{V} \ar[d, "\tilde{\varphi}"'] \ar[dr, "\psm"] \& \\
    \blrep{V} \ar[r, "\pbl"'] \& V 
  \end{cd}
\end{gather*}
commutes, and thus $ \tilde{\varphi} $ is the desired isomorphism.

The correspondence presented by $ \tilde{\varphi} $ can be also restated as follows.
For each $ f \in \nor{V} $, which is generated by a generating tree $ \sigma $, we can identify $ f $ with the family $ (\sigma \lP n \rP) _{n \in \mathbb{N}} $ in the aforementioned way.
On the other hand, we can also identify each $ g \in \blrep{V} $ with the family $ (g (X ^n)) _{n \in \mathbb{N}} $.
Then the statement that $ f \in \nor{V} $ corresponds with $ g \in \blrep{V} $ exactly means that the families $ (\sigma \lP n \rP) _{n \in \mathbb{N}} $ and $(g (X ^n)) _{n \in \mathbb{N}} $ coincide.

Thus if we regard $ \nor{V} $ and $ \blrep{V} $ as sets of those identified families, Block and Leroux's construction and ours state in a different way the condition which a family needs to satisfy to belong to $ \nor{V} $ or $ \blrep{V} $.

\section*{Acknowledgements}

This work is supported by Leading Graduate Course for Frontiers of Mathematical Sciences and Physics.
I would like to thank my supervisor, Associate Professor Ryu Hasegawa, for his thoughtful guidance.

\bibliographystyle{plain}
\bibliography{reference}

\end{document}